\newtheorem{theorem}{Theorem}[section]
\newtheorem{lemma}[theorem]{Lemma}
\newtheorem{proposition}[theorem]{Proposiiton}
\theoremstyle{definition}
\newtheorem{definition}[theorem]{Definition}
\theoremstyle{remark}
\newtheorem{remark}[theorem]{Remark}
\numberwithin{equation}{section}
\newcommand{\Cal}[1]{{\mathcal #1}}
\DeclareMathOperator{\Spec}{Spec}
\DeclareMathOperator{\op}{op}
\DeclareMathOperator{\Rad}{Rad}
\newcommand{\Z}{\mathbb{Z}}
\newcommand{\rann}{\operatorname{r.ann}}
\newcommand{\lann}{\operatorname{l.ann}}
\DeclareMathOperator{\Aut}{Aut}
\newcommand{\Grp}{\mathsf{Grp}}
\newcommand{\End}{\operatorname{End}}
\newcommand{\Rng}{\mathsf{Rng}}
\newcommand{\Ring}{\mathsf{Ring}}
\newcommand{\Mod}{\operatorname{Mod-\!}}
\newcommand{\lSdp}{\mbox{\rm -}{\mathsf{Sdp}}}
\newcommand{\id}{\mbox{\rm id}}
\newcommand{\soc}{\mbox{\rm soc}}
\begin{document}

  \title[From the point of view of complete multiplicative lattices]{Algebraic structures from the point of view of complete multiplicative lattices}
 
    \author{Alberto Facchini}
\address{Dipartimento di Matematica ``Tullio Levi-Civita'', Universit\`a di 
Padova, 35121 Pa\-do\-va, Italy}
 \email{facchini@math.unipd.it}
\thanks{Partially supported by Ministero dell'Istruzione, dell'Universit\`a e della Ricerca (Progetto di ricerca di rilevante interesse nazionale ``Categories, Algebras: Ring-Theoretical and Homological Approaches (CARTHA)''), Fondazione Cariverona (Research project ``Reducing complexity in algebra, logic, combinatorics - REDCOM'' within the framework of the programme Ricerca Scientifica di Eccellenza 2018), and the Department of Mathematics ``Tullio Levi-Civita'' of the University of Padua (Research programme DOR1828909 ``Anelli e categorie di moduli'').}

\subjclass[2020]{Primary 06B75. Secondary 16T25, 20A99, 22F05.}

\date{}

      \begin{abstract} General results on multiplicative lattices found recently by Facchini, Finocchiaro and Janelidze have been studied in the particular case of groups by Facchini, de Giovanni and Trombetti. In this paper we prove that these results hold not only for the multiplicative lattices of all normal subgroups of a group, but also for much more general multiplicative lattices. Therefore they can be applied to other algebraic structures, for instance to braces. \end{abstract}

\maketitle

\section{Introduction}

In the history of Mathematics there have been at least  two very successful attempts in trying to put in order algebraic structures. One has been with the creation of Universal Algebra, developed in particular by  Garrett Birkhoff, and the other has been with the discovery of Category Theory first \cite{EM}, and then the development of Categorical Algebra. Smith \cite{S1976}, Hagemann-Hermann \cite{HH1979} and Freese and McKenzie \cite{FM1987} introduced a generalization to any algebra in a congruence-permutable variety (or congruence-modular variety, respectively) of
the commutator for groups. This immediately opened the possibility of applying concepts like solvability, nilpotence and the center, to very general settings (see \cite[p.~283]{BS}).
 In this paper we follow this direction, stressing how the point of view of multiplicative lattices also allows to reorganize several concepts relative to algebraic structures. Of course, our project is not so ambitious like creating a new point of view comparable to the extremely powerful introduction of Universal Algebra and Category Theory! 

\begin{definition}\label{defi1}\emph{
    A multiplicative lattice is a complete lattice $(L,\vee,\wedge)$ equipped with a further binary operation $\cdot{\,}\colon L\times L\to L$ (multiplication) satisfying $x\cdot y\leqslant x\wedge y$ for all $x,y\in L$.} 
\end{definition}

As usual, we will often write $xy$ instead of $x\cdot y$. The smallest and the largest elements of the complete lattice $L$ will be denoted by $0$ and $1$, respectively. Notice that $1$ is not an identity for the multiplication $\cdot$, in general.

Multiplicative lattices are an algebraic structure to which little attention has been devoted, but which already appeared in Krull \cite{[K1924]}, and has been studied by M.~Ward \cite{[W1937], [W1938]}, M.~Ward and R.~P.~Dilworth \cite{[WD1939]}, D.~D.~Anderson \cite{D.D. Anderson, 1A}, E.~W.~Johnson, and J.~A.~Johnson \cite{J, JJ}.
In all these papers, further axioms are required: associativity or commutativity of multiplication, distributivity of multiplication with respect to the join $\vee$, existence of an identity, compatibility of multiplication and partial order, the multiplication is the meet $\wedge$, and so on. We don't require any of these further conditions.

\medskip

It is better to view a multiplicative lattice $L$ not as a complete lattices like in Definition~\ref{defi1}, but as a complete join-semilattice $(L,\vee)$ necessarily with $0$ and $1$. Let's be more precise. A {\em join-semilattice} is a partially ordered set that has a join (a least upper bound) for any nonempty finite subset. Equivalently, a join-semilattice is an  algebraic structure $(L,\vee)$ for which $\vee$ is a binary operation that is associative, commutative and in which every element of $L$ is idempotent.
A {\em complete} join-semilattice is a partially ordered set (define $x\le y$ if and only if $x\vee y=y$) that has a join (a least upper bound) for any subset. In a complete join-semilattice a meet (a greatest lower bound) also exists for any subset, so that complete join-semilattices are exactly complete lattices. Every complete join-semilattice has a least element $0$ (the least upper bound of the empty subset) and a greatest element $1$ (the least upper bound of the improper subset). If we view a complete join-semilattice as an algebraic structure $(L,\vee, 0,1)$, then $(L,\vee, 0)$ is a commutative monoid in which every element is idempotent and in which the identity for the operation $\vee$ is $0$, and $(L,\vee, 0,1)$ is a commutative monoid with zero in which every element is idempotent, the identity for the operation $\vee$ is $0$, and the zero is $1$ (because $x\vee 1=1$ for every $x\in L$). This argument could seem annoying, but it is necessary in order to explain the notion of morphism of multiplicative lattices:  there is no difference between complete lattices and complete join-semilattices, but the difference is in morphisms: a morphism $f\colon L\to M$ of complete join-semilattices is a mapping such that $f(\bigvee X)=\bigvee f(X)$ for every subset $X$ of $L$.

It is better not to look at multiplicative lattices as complete lattices $(L,\vee,\wedge)$ with a further third operation $\cdot\,$: it is better to view them as complete join-semilattices $(L,\vee)$ with a further second operation $\cdot$ for which $x\cdot y\le x$ and $x\cdot y\le y$ for every $x,y\in L$. In some sense, the operation $\cdot$ in multiplicative lattices is a replacement of the operation $\wedge$. That is, in some sense, multiplicative lattices are deformations of lattices: given a lattice $(L,\vee,\wedge)$, the operation $\vee$ remains, but the operation $\wedge$ is replaced by the multiplication $\cdot\,$, and $xy\le x$, $xy\le y$ for every $x,y\in L$.

\begin{lemma}\label{1.1} If $f\colon L\to M$ is a morphism of complete join-semilattices, then there is a unique morphism $u\colon M\to L$ of complete meet-semilattices such that, for every $x\in L$ and every $y\in M$, $f(x)\le y\Leftrightarrow x\le u(y)$. \end{lemma}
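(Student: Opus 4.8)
The plan is to recognize this as the standard construction of the right adjoint to a join-preserving map between complete lattices. The natural candidate is to \emph{define} $u$ by the formula
$$u(y) := \bigvee \{\, x \in L : f(x) \le y \,\},$$
the largest element of $L$ whose $f$-image lies below $y$. Before anything else I would record that $f$ is monotone, which follows purely from join-preservation: if $x \le x'$ then $x \vee x' = x'$, hence $f(x') = f(x) \vee f(x') \ge f(x)$.

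First I would verify the defining equivalence $f(x) \le y \Leftrightarrow x \le u(y)$. The direction $(\Rightarrow)$ is immediate: if $f(x) \le y$ then $x$ belongs to the set whose join defines $u(y)$, so $x \le u(y)$. For $(\Leftarrow)$ the key computation is that $f$ applied to the defining join, again using join-preservation, satisfies
$$f(u(y)) = \bigvee \{\, f(x) : f(x) \le y \,\} \le y;$$
then from $x \le u(y)$ and monotonicity of $f$ we get $f(x) \le f(u(y)) \le y$.

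Next I would show that $u$ is a morphism of complete meet-semilattices, i.e.\ that $u(\bigwedge Y) = \bigwedge_{y\in Y} u(y)$ for every subset $Y \subseteq M$. This follows formally from the equivalence just proved, with no further appeal to the explicit formula: for an arbitrary $x \in L$ we have $x \le u(\bigwedge Y) \Leftrightarrow f(x) \le \bigwedge Y \Leftrightarrow \big(f(x)\le y \text{ for all } y\in Y\big) \Leftrightarrow \big(x \le u(y) \text{ for all } y \in Y\big) \Leftrightarrow x \le \bigwedge_{y\in Y} u(y)$. Since this holds for every $x \in L$, antisymmetry forces $u(\bigwedge Y) = \bigwedge_{y\in Y} u(y)$; the empty-set case simply records $u(1)=1$.

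Finally, uniqueness is the usual uniqueness of adjoints: if $u'$ also satisfies $f(x)\le y \Leftrightarrow x \le u'(y)$ for all $x,y$, then $x \le u(y) \Leftrightarrow x \le u'(y)$ for every $x$, so substituting $x = u(y)$ and $x = u'(y)$ gives $u(y) \le u'(y)$ and $u'(y) \le u(y)$, whence $u = u'$. I do not expect a genuine obstacle here; the only point requiring care is to keep straight that join-preservation of $f$ is used twice — once to obtain monotonicity, once to push $f$ through the defining supremum — whereas the meet-preservation of $u$ and the uniqueness both rely solely on the established adjunction equivalence.
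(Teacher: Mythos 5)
Your proof is correct and complete: the paper states Lemma~\ref{1.1} without proof (immediately remarking only that $u$ is the right adjoint of $f$ when the lattices are viewed as categories), and your argument is precisely the standard construction that justifies this, with the candidate $u(y)=\bigvee\{x\in L: f(x)\le y\}$, the adjunction equivalence verified via join-preservation, and meet-preservation and uniqueness derived formally from the equivalence alone. Nothing is missing; the bookkeeping of where join-preservation is actually used (monotonicity, and pushing $f$ through the defining supremum) is exactly right.
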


Considering the complete lattices $L$ and $M$ as categories, $u$ is a right adjoint of $f$. The category $\mathsf{CML}$ of complete multiplicative lattices is  the category whose morphisms are  the  morphisms $f\colon L\to M$ of complete join-semilattices  such that  $f(x)f(x')\leqslant f(xx')$ for all $x,x'\in L$. Thus a morphism of complete multiplicative lattice is a mapping $f\colon L\to M$ such that $f(\bigvee X)=\bigvee f(X)$ for every subset $X$ of $L$, $f(x)f(x')\leqslant f(xx')$ for all $x,x'\in L$, and $f(1)=1$. 

\medskip

This paper strongly depends on the papers \cite{FFJ} and \cite{FGT}.

\section{Prime elements and Zariski spectrum of a multiplicative lattice}\label{2}

The content of this section is taken from \cite{FFJ}. Let $L$ be a multiplicative lattice.
	An element $p$ of $L$ is a {\em prime element} of $L$ if $p\neq1$ and
	\begin{equation*}
	xy\leqslant p\Rightarrow(x\leqslant p\,\,\,\text{or}\,\,\,y\leqslant p)
	\end{equation*}
	for every $x,y\in L$.
	The set $\mathrm{Spec}(L)$ of all prime elements of $L$ is the {\em Zariski spectrum} (or the {\em prime spectrum}) of $L$.

\medskip

Let $\Cal P(\Spec(L))$ be the power set of $\Spec(L)$, that is, the set of all subsets of $\Spec(L)$. We can define two mappings $$V\colon L\to \Cal P(\Spec(L))$$ by \begin{equation*}
V(x)=\{\,p\in\Spec(L)\mid x\leqslant p\,\}
\end{equation*}    
for every $x\in L$, and $$I\colon \Cal P(\Spec(L))\to L,$$ defined by $I(X)=\bigwedge X$ for every subset $X$ of $\Spec (L)$.

If we view $V$ as a mapping $V\colon L\to \Cal P(\Spec(L))^{\op}$, then $V$ turns out to be a morphism of complete join-semilattices (i.e., $V(\bigvee X)=\bigcap_{x\in X} V(x)$ for every subset $X$ of $L$),  and $I\colon \Cal P(\Spec(L))^{\op}\to L$ is the right adjoint of $V$ in the sense of Lemma~\ref{1.1}. The mapping $V\colon L\to \Cal P(\Spec(L))$ has two further important properties:

(1) $V$ transforms the multiplication in $L$ into the union in ${\mathcal{P}}(\Spec(L))$, that is, $V$ is a magma morphism of the magma $(L,\cdot)$ into the magma (the commutative monoid) $(\mathcal{P}(\Spec(L)),\cup)$: $$V(xy)=V(x)\cup V(y)\qquad\mbox{\rm for every }x,y\in L,$$ and $V(1)=\emptyset$. Hence $V$ is a morphism of multiplicative lattices of $(L,\vee,\cdot)$ into the multiplicative lattice $( \Cal P(\Spec(L))^{\op},\cap,\cup)$.

(2) The image $V(L)$ of the mapping $V$ satisfies the axioms for the closed sets of a topology on $\Spec(L)$, called the {\em Zariski topology}, and $\Spec(L)$ with this topology turns out to be a sober topological space \cite[Lemma 2.6]{FFJ}. 

From the fact that $I$ is a right adjoint of $V$,  we get that $V$ and $I$ induce mutually inverse antiisomorphisms of partially ordered sets between the partially ordered subset $\Rad(L):=I(\mathcal{P}(\Spec(L)))$ of $L$ and the partially ordered subset $V(L)$ of $\mathcal{P}(\Spec(L))$. Hence, we have a lattice isomorphism between $\Rad(L)$, the lattice of all elements of $L$ that are intersections of primes, and $\Omega(\Spec(L))$, the lattice of all open subsets of the Zariski space $\Spec(L)$. We will come back to the elements of $L$ that are intersections of primes in Proposition~\ref{char}.

Since $\Omega(\Spec(L))$ is a sublattice of the distributive lattice $\mathcal{P}(\Spec(L))$, it follows that the lattices $\Rad(L)\cong\Omega(\Spec(L))$ are also distributive lattices, which are complete (though joins of infinite subsets in $\Rad(L)$ are not the same as in $L$, and meets of infinite subsets of $\Omega(\Spec(L))$ are not the same as in $\mathcal{P}(\Spec(L))$, but are the interiors of the intersections).

\begin{theorem} {\rm \cite[Theorem 3.2]{FFJ}} 
	Let $f\colon L\to M$ be a morphism in $\mathsf{CML}$ and let $u\colon M\to L$ be its right adjoint. Then:
	\begin{itemize}
		\item [(a)] If $p$ is a prime element of $M$, then $u(p)$ is a prime element of $L$.
		\item [(b)] The map $\Spec(f)\colon {\Spec}(M)\to\Spec(L)$ defined by ${\Spec}(f)(p)=u(p)$ of every $p\in\Spec(M)$ is continuous. Moreover, $(\Spec(f))^{-1}(\mathrm{V}(x))=\mathrm{V}(f(x))$ for every $x\in L$.
		\item [(c)] The assignment $L\mapsto \Spec(L)$ defines a functor $\Spec\colon\mathsf{CML}^{\mathrm{op}}\to\mathsf{STop}$, where $\mathsf{STop}$ is the category of sober topological spaces.
		\item [(d)]{\rm \cite[12.3(e)]{FFJ}} The  functor $\Spec\colon\mathsf{CML}^{\mathrm{op}}\to\mathsf{STop}$ is the right adjoint of the functor $\Omega\colon \mathsf{STop}\to \mathsf{CML}^{\mathrm{op}}$, that assigns to each sober space $X$ the complete lattice $\Omega(X)$ of its open subsets with multiplication the intersection of two open subsets.
	\end{itemize}
\end{theorem}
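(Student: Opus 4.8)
The plan is to let the adjunction $f\dashv u$ of Lemma~\ref{1.1} do all the work in (a)--(c), and then to build the unit and counit of the adjunction $\Omega\dashv\Spec$ by hand for (d).

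For (a) I would first check $u(p)\neq 1$: if $u(p)=1$ then $1\leqslant u(p)$, so the adjunction gives $f(1)\leqslant p$, and since $f(1)=1$ this forces $p=1$, contradicting primeness. For the implication, suppose $xy\leqslant u(p)$ in $L$. The adjunction yields $f(xy)\leqslant p$, and since $f$ is a morphism in $\mathsf{CML}$ we have $f(x)f(y)\leqslant f(xy)\leqslant p$. As $p$ is prime in $M$, either $f(x)\leqslant p$ or $f(y)\leqslant p$, and applying the adjunction once more gives $x\leqslant u(p)$ or $y\leqslant u(p)$. Hence $u(p)$ is prime.

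Part (a) makes $\Spec(f)$ well defined. For the preimage formula in (b) I would simply unwind definitions: $(\Spec(f))^{-1}(V(x))=\{\,p\in\Spec(M)\mid x\leqslant u(p)\,\}$, and by the adjunction $x\leqslant u(p)\Leftrightarrow f(x)\leqslant p$, so this set equals $V(f(x))$. Since, by property~(2) above, the closed subsets of $\Spec(L)$ are exactly the sets $V(x)$, and each of their preimages $V(f(x))$ is closed in $\Spec(M)$, continuity of $\Spec(f)$ follows at once. For functoriality in (c) I would use that right adjoints are unique and compose in reverse order: the right adjoint of $\id_L$ is $\id_L$, so $\Spec(\id_L)=\id_{\Spec(L)}$, while for morphisms $f\colon L\to M$ and $g\colon M\to N$ the right adjoint of $gf$ is $u_f u_g$, whence $\Spec(gf)=\Spec(f)\,\Spec(g)$; this reversal is exactly what makes $\Spec$ covariant on $\mathsf{CML}^{\op}$. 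Combined with the soberness of $\Spec(L)$ (property~(2)), this produces the functor.

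The substance is in (d), where I would exhibit the adjunction through an explicit natural bijection $\operatorname{Hom}_{\mathsf{CML}}(L,\Omega(X))\cong\operatorname{Hom}_{\mathsf{STop}}(X,\Spec(L))$. The counit is the $\mathsf{CML}$-morphism $\theta_L\colon L\to\Omega(\Spec(L))$, $\theta_L(x)=\Spec(L)\setminus V(x)$; one checks directly that it preserves arbitrary joins (as $V$ turns joins into intersections), sends $1$ to the top $\Spec(L)$, and is multiplicative, since $\theta_L(x)\cap\theta_L(y)=\theta_L(xy)$ follows from $V(xy)=V(x)\cup V(y)$. The unit is the map $\sigma_X\colon X\to\Spec(\Omega(X))$, $\sigma_X(t)=X\setminus\overline{\{t\}}$, and a short argument shows $X\setminus\overline{\{t\}}$ is a prime element of the multiplicative lattice $(\Omega(X),\cup,\cap)$. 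Given a continuous $\phi\colon X\to\Spec(L)$ I would assign the $\mathsf{CML}$-morphism $x\mapsto\phi^{-1}(\theta_L(x))$, and conversely recover a map $X\to\Spec(L)$ from a $\mathsf{CML}$-morphism $g\colon L\to\Omega(X)$ as the composite $\Spec(g)\circ\sigma_X$.

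The main obstacle is showing these two assignments are mutually inverse, which is precisely where soberness of $X$ enters: it guarantees that $t\mapsto X\setminus\overline{\{t\}}$ is a bijection onto the prime open sets (each irreducible closed set having a unique generic point), so that $\sigma_X$ is a homeomorphism. I would then verify the two triangle identities, where the one on the $\mathsf{STop}$ side feeds on the preimage identity $(\Spec(f))^{-1}(V(x))=V(f(x))$ from (b) and the one on the $\mathsf{CML}$ side on the fact that $\theta_L$ hits exactly the complements of closed sets, and finally confirm naturality in both $X$ and $L$ by chasing everything through the preimage description. I expect the bookkeeping of naturality, rather than any single inequality, to be the delicate point.
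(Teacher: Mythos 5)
The paper does not actually prove this theorem: it is imported verbatim from \cite{FFJ} (Theorem 3.2 and 12.3(e) there), so there is no in-paper proof to compare against and your argument has to stand on its own. It does. Parts (a)--(c) are complete and correct: the adjunction $f(x)\le y\Leftrightarrow x\le u(y)$ together with $f(1)=1$ and $f(x)f(y)\le f(xy)$ gives primeness of $u(p)$; the identity $(\Spec(f))^{-1}(V(x))=V(f(x))$ is exactly the adjunction unwound, and continuity follows because property (2) identifies the $V(x)$ as precisely the closed sets; functoriality is uniqueness of right adjoints plus $u_{gf}=u_f u_g$, which accounts for the contravariance. For (d) you give the correct standard soberification-style construction, and the two deferred verifications do go through: with $u_g(W)=\bigvee\{x\mid g(x)\subseteq W\}$ one computes $\Spec(g)(\sigma_X(t))=\bigvee\{x\mid \phi(t)\in V(x)\}=\phi(t)$ in one direction and $\phi^{-1}(\theta_L(x))=\{t\mid g(x)\cap\overline{\{t\}}\ne\emptyset\}=g(x)$ in the other, the last step using that $g(x)$ is open; sobriety of $X$ enters exactly where you place it, to make $\sigma_X$ bijective onto the primes of $\Omega(X)$. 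Two small points worth making explicit if you write this up: the primeness of $X\setminus\overline{\{t\}}$ in $(\Omega(X),\cup,\cap)$ rests on the observation that an open set meets $\overline{\{t\}}$ if and only if it contains $t$, and the well-definedness of $\Spec(f)$ as a map into $\Spec(L)$ is precisely part (a), so (b) genuinely depends on it.
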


Our main application of multiplicative lattices is, for an algebraic structure $A$, its lattice of congruences $\Cal L(A)$. But on $\Cal L(A)$ we must fix a multiplication, and the spectrum then depends on the multiplication chosen on $\Cal L(A)$. For example, consider a discrete valuation ring $R$, for instance $\Z_p$. Its lattice of congruences $\Cal L(R)$ is isomorphic to the complete lattice $\Z_{\le 0}\cup\{-\infty\}$. If on $\Cal L(R)$ we fix the usual multiplication of ideals, we get that the spectrum of $\Cal L(R)$ has two points (the zero ideal and the maximal ideal). But if we fix as multiplication on $\Cal L(R)$ intersection of ideals, then all proper ideals are prime, so that the spectrum of $\Cal L(R)$ is infinite. If we fix as multiplication on $\Cal L(R)$ the constant mapping $0$ (i.e., the product of any two ideals of $R$ is always the zero ideal), then the the spectrum of $\Cal L(R)$ is empty. A further possibility is to fix as product of two ideals $I$ and $J$ in $\Cal L(R)$ for an arbitrary (noncommutative) ring $R$ the two-sided ideal generated by all commutators $[i,j]=ij-ji$, where $i\in I$ and $j\in J$.
Hence the spectrum doesn't depend only on the ring $R$ and the lattice $\Cal L(R)$, but also on the multiplication chosen on the lattice of its congruences.

\subsection{Changing the operation}\label{square}

Another thing to which little attention has been devoted in Algebra until now is composition of operations. It has been studied in \cite{Coreano} and \cite{Sergio}. Given two binary operations $\ast$ and $\circ$ on a set $X$, it is possible to define a third binary operation $\square$ setting $x\square y:=(x\ast y)\circ(y\ast x)$.  For example, given an associative algebra $A$ over a field $k$, it is possible to give $A$ a Lie algebra structure taking as Lie bracket  the commutator $[ x , y ] = x y - y x$. More precisely, for a set $X$, it is possible to consider the set $\Cal M(X)$ of all binary operations on $X$, and define a binary operation $\triangleleft$ on $\Cal M(X)$ setting, for every $x,y\in X$, $x(\ast\triangleleft\circ)y:=(x\ast y)\circ(y\ast x)$. In this way, one gets that $\Cal M(X)$ is a monoid with respect to the operation $\triangleleft$, which is non-commutative if $X$ has at least two elements \cite{Coreano, Sergio}. The identity of the monoid $\Cal M(X)$  is the operation $x\ast y:=x$ for every $x,y\in X$.

We can apply this to our multiplicative lattices $(L,\vee,\cdot)$. Given a multiplicative lattice $(L,\vee,\cdot)$, we can define a new binary operation $\square$ on $L$ setting $x\square y:=(x\cdot y)\vee(y\cdot x)$. Then $(L,\vee,\square)$ turns out to be a multiplicative lattice \cite[Example~3.7]{FFJ}. A second example for this operation on $\Cal M(X)$ is given by Jordan product. If $A$ is an associative algebra over a field of characteristic $\ne 2$, one can construct a Jordan algebra $A^+ $ with the same underlying vector space and the multiplication $x\circ y$  (the {\em Jordan product}) defined by $$x\circ y:={\frac {xy+yx}{2}}.$$ The coefficient $\frac{1}{2}$ is convenient here because if any two elements $x,y$ of $A$ commute, then $x\circ y=x\cdot y$, and if $e$ is any two-sided identity in $A$, then $e$ is also a two-sided identity in $A^+$. This is not necessary for a multiplicative lattice $(L,\vee,\cdot)$, where the operation $\vee$ is always idempotent, so that if any two elements $x,y$ of $(L,\cdot)$ commute, then $x\square y=x\cdot y$, and if $e$ is any two-sided identity in $(L,\cdot)$ then $e$ is also a two-sided identity in $(L,\square)$. Notice that, as for Jordan algebras, the operation $\square$ on $L$ is always commutative.

Assume that $L$ satisfies the {\em monotonicity condition}, i.e., that $x\le y$ and $x'\le y'$ imply $xx'\le yy'$ for every $x,y,x',y'\in L$. It can then be proved that an element $p\in L$ is prime in the multiplicative lattice $(L,\vee,\cdot)$ if and only if it is a prime element in the multiplicative lattice $(L,\vee,\square)$, and that the spectra of these two multiplicative lattices are the same topological space \cite[Example~3.7]{FFJ}. This shows that in the study of multiplicative lattices $L$ with the monotonicity condition and their spectra $\Spec(L)$, it will be often possible to assume the operation $\cdot$ commutative.

\section{Some further terminology in multiplicative lattices}

Keeping in mind the example $\Cal N(G)$ of the multiplicative lattice of all normal subgroups of a group $G$, with the commutator of normal subgroups as multiplication, the following terminology turns out to be very natural.

Let $x$ be an element of a multiplicative lattice $L$. 
The {\em lower central series} (or {\em descending central series}) of $x$ is the descending series $$x=x_1\ge x_2\ge x_3\ge\dots,$$ where $x_{n+1}:=x_n\cdot x$ for every $n\ge 1$. If $x_n=0$ for some $n\ge 1$, then $x$ is {\em left nilpotent}. The element $x$ is {\em idempotent} if $x_2=x\cdot x=x$, and {\em abelian} if $x_2=x\cdot x=0$. Similarly, the element $x$ is {\em right nilpotent} if $_nx=0$ for some $n$, where now in the descending series  the elements $_nx$ are defined recursively  by $_{n+1}x:=x\cdot {}_nx$. Clearly, if the multiplication in the multiplicative lattice $L$ is either associative or commutative, then left nilpotency coincides with right nilpotency.

The {\em derived series} of $x$ \cite[Definition~6.1]{FFJ}  is the descending series $$x:=x^{(0)}\ge x^{(1)}\ge x^{(2)}\ge\dots,$$ where $x^{(n+1)}:=x^{(n)}\cdot x^{(n)}$ for every $n\ge 0$. The term $x':=x_2=x\cdot x=x^{(1)}$ is the {\em derived element} of $x$. The element $x$ of $L$ is {\em solvable} if $x^{(n)}=0$ for some integer $n\ge0$. 

If the multiplication on the lattice is associative, then left nilpotency, right nilpotency and solvability of an element $x\in L$ coincide.

For the multiplicative lattice $\Cal N(G)$ of a group $G$ with operation the commutator of two normal subgroups (which is commutative, but not associative), the element $1=G$ of $\Cal N(G)$ is left (=right) nilpotent as an element of the multiplicative lattice in the sense just described if and only if the group $G$ is nilpotent, is a solvable element if and only if $G$ is a solvable group, is an abelian element if and only if $G$ is an abelian group, and is an idempotent element if and only if the group $G$ is perfect. For the multiplicative lattice $\Cal L(R)$ of a ring $R$ with operation the product of two ideals (which is associative, but not commutative in general), the element $1=R$ of $\Cal L(R)$ is left nilpotent (=right nilpotent=solvable) as an element of the multiplicative lattice if and only if the ring $R$ is nilpotent, and is an abelian element if and only if the ring $R$ is an additive abelian group with the zero multiplication.

\medskip

Recall that an element $x$ of a lattice $L$ is {\em meet-irreducible} if $x=y\wedge z$ implies $x=y$ or $x=z$ for every $y,z\in L$. 

\begin{definition}{\rm An element $s$ of a multiplicative lattice $L$ is {\em semiprime} if $x^2\le s$  implies $x\le s$ for every $x\in L$.}\end{definition}

\begin{lemma}\label{3} In a multiplicative lattice $L$, every prime element is meet-irreduc\-ible and semiprime.\end{lemma}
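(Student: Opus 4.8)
The plan is to treat the two assertions separately, dealing with semiprimeness first, since it follows directly from the definition, and then meet-irreducibility, which is where the defining inequality $xy \le x \wedge y$ of a multiplicative lattice plays its role. For semiprimeness, I would let $p$ be a prime element and suppose that $x^2 \le p$ for some $x \in L$. Writing $x^2 = x \cdot x$ and applying the prime condition with $y := x$, the implication $x \cdot x \le p \Rightarrow (x \le p \text{ or } x \le p)$ collapses immediately to $x \le p$. Hence $p$ is semiprime, with no further computation required.

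For meet-irreducibility, I would again let $p$ be prime and suppose $p = y \wedge z$ for some $y, z \in L$. The key move is to form the product of the two factors: by the defining inequality of a multiplicative lattice, $y \cdot z \le y \wedge z = p$, so primeness forces $y \le p$ or $z \le p$. On the other hand, $p = y \wedge z \le y$ and $p = y \wedge z \le z$ hold automatically, since $p$ is the meet of $y$ and $z$. Combining these, in the first case $y \le p \le y$ yields $p = y$, and in the second $z \le p \le z$ yields $p = z$. Thus $p$ is meet-irreducible, which completes the proof.

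There is no genuine obstacle here; the only point requiring a moment's thought is in the meet-irreducibility argument, where one must recognize that the product $y \cdot z$ of the two factors should be formed precisely because it lands below $y \wedge z = p$, thereby activating the prime hypothesis. Everything else reduces to chasing the inequalities $xy \le x \wedge y$ and $p \le y$, $p \le z$, and comparing them.
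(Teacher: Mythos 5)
Your proof is correct and follows exactly the same route as the paper's: semiprimeness from the prime condition with $y=x$, and meet-irreducibility by forming $yz\le y\wedge z=p$ and combining with $p\le y$, $p\le z$. The only difference is that you spell out the semiprime part, which the paper dismisses with ``clearly.''
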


\begin{proof} Clearly, prime elements are semiprime. Let $p\in P$ be a prime element and suppose $p=y\wedge z$. Then $yz\le y\wedge z=p$ implies that $y\le p$ or $z\le p$. But $p=y\wedge z$ implies $p\le y$ and $p\le z$. Therefore either $p=y$ and $p= z$.
\end{proof}

\begin{theorem}
Let $L$ be a multiplicative lattice that satisfies the monotonicity condition. Then all $\Spec(L)=L\setminus\{1\}$ if and only if $L$ is linearly ordered and every element of $L$ is idempotent.\end{theorem}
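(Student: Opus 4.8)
The plan is to prove the two implications separately, and I expect the monotonicity hypothesis to be needed only for the direction ``chain with idempotent elements $\Rightarrow\Spec(L)=L\setminus\{1\}$''; the converse should fall out of Lemma~\ref{3} together with a short direct argument.

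For the implication ($\Leftarrow$), fix $p\in L$ with $p\ne1$ and aim to show $p$ is prime. Suppose $xy\le p$. Since $L$ is linearly ordered, either $x\le y$ or $y\le x$; say $x\le y$. Then monotonicity, applied to $x\le x$ and $x\le y$, gives $x\cdot x\le x\cdot y=xy\le p$, and idempotence of $x$ yields $x=x\cdot x\le p$. The symmetric case gives $y\le p$. Hence every $p\ne1$ is prime, i.e. $\Spec(L)=L\setminus\{1\}$. This is the only place monotonicity enters.

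For the converse ($\Rightarrow$) I assume every element $\ne1$ is prime and argue in two independent steps. \emph{Idempotence:} given $x\in L$, put $p:=x\cdot x$, so $p\le x\wedge x=x$. If $p\ne1$ then $p$ is prime, and applying the prime condition to the inequality $x\cdot x\le p$ with both factors equal to $x$ gives $x\le p=x\cdot x$; combined with $x\cdot x\le x$ this forces $x=x\cdot x$. If instead $p=1$, then $1=x\cdot x\le x$ forces $x=1$, and then $x\cdot x=1=x$ as well. Thus every element is idempotent, the top element included. \emph{Linear order:} by Lemma~\ref{3} every prime element is meet-irreducible, so every element $\ne1$ is meet-irreducible. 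If $L$ were not a chain it would contain incomparable $a,b$; then $c:=a\wedge b$ satisfies $c<a$ and $c<b$, so the equality $c=a\wedge b$ with $a\ne c\ne b$ exhibits $c$ as meet-reducible, while $c\ne1$ (otherwise $a=b=1$). This contradicts the meet-irreducibility of $c$, so $L$ is linearly ordered.

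The two genuinely delicate points are the bookkeeping at the top element $1$ in the idempotence step (one must note that $x\cdot x=1$ is impossible unless $x=1$, which is exactly what makes $1$ itself come out idempotent) and the recognition that the linear-order half is purely lattice-theoretic, following from meet-irreducibility alone via the fact that the meet of two incomparable elements is always a proper, non-top meet-decomposition. Beyond correctly invoking the prime condition with coincident factors $x=y$, I anticipate no serious obstacle.
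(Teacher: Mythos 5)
Your proposal is correct and follows essentially the same route as the paper: the linear-order half of the forward direction uses meet-irreducibility of primes (Lemma~\ref{3}) exactly as the paper does, the idempotence half is the paper's semiprimeness argument rephrased directly in terms of the prime condition with coincident factors, and the converse uses monotonicity plus idempotence in the same way (you write $x=x\cdot x\le x\cdot y$ for the smaller factor where the paper writes $(x\wedge y)^2\le xy$, a cosmetic difference). The extra bookkeeping at $x\cdot x=1$ is harmless and correctly handled.
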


\begin{proof} Suppose $\Spec(L)=L\setminus\{1\}$. If $L$ is not linearly ordered, there exist $x,y\in L$ such that $x\nleqslant y$ and $y\nleqslant x$. Then $x$ and $y$ are two elements of $L$ with meet $x\wedge y$, and $x>x\wedge y$, $y>x\wedge y$. This shows that $x\wedge y$ is not meet-irreducible, hence is not prime (Lemma~\ref{3}). But $x>x\wedge y$ shows that $x\wedge y\in L\setminus\{1\}$. A contradiction.

Now let $x$ be an element of $L$ that is not idempotent. Then $x^2<x$, so that $x^2$ is not semiprime. In particular $x^2$ is not prime (Lemma~\ref{3}), another contradiction.

Conversely, assume $L$ linearly ordered and that every element of $L$ is idempotent. Let $p$ be any element of $L$, $p\ne 1$. In order to show that $p$ is prime, suppose $x,y\in L$, $x\nleqslant p$ and $y\nleqslant p$. Then $x>p$ and $y>p$. As $L$ is linearly ordered, it follows that $x\wedge y>p$. By the  monotonicity condition, it follows that $xy\ge(x\wedge y)(x\wedge y)=x\wedge y>p$. This proves that $p$ is prime.
\end{proof}


\section{Hyperabelian multiplicative lattices}

{\em In the first two lemmas of this Section we will suppose that $(L,\vee,\cdot)$ is a multiplicative lattice, which is algebraic (every element of $L$ is the join of compact elements) and satisfies the monotonicity condition. Moreover $C(L)$ will denote the set of all compact elements of $L$. }One has that:

\begin{lemma}\label{111} {\rm \cite[Lemma 6.16]{FFJ}} An element $p\ne 1$ in $L$ is prime if and only if \begin{equation*}
	xy\leqslant p\Rightarrow(x\leqslant p\,\,\,\text{or}\,\,\,y\leqslant p)
	\end{equation*}
	for every $x,y\in C(L)$.\end{lemma}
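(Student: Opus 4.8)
The plan is to establish the two implications separately. One direction is immediate: if $p$ is prime then the displayed implication holds for \emph{all} $x,y\in L$ by definition, so in particular it holds for all $x,y\in C(L)$. All the content lies in the converse, so I would concentrate there.

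For the converse, assume the displayed condition holds for every pair of compact elements, and let $x,y\in L$ be arbitrary with $xy\leqslant p$; I want to conclude $x\leqslant p$ or $y\leqslant p$. I would argue contrapositively: suppose $x\nleqslant p$ and $y\nleqslant p$, and produce a contradiction by showing $xy\nleqslant p$.

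The key step is to replace $x$ and $y$ by suitable compact elements beneath them, and here algebraicity does the work. Since $L$ is algebraic, $x=\bigvee\{\,c\in C(L)\mid c\leqslant x\,\}$. If every compact $c\leqslant x$ satisfied $c\leqslant p$, then $x$, being the join of these compacts, would satisfy $x\leqslant p$, contrary to assumption; hence there is a compact $c\leqslant x$ with $c\nleqslant p$. Symmetrically there is a compact $d\leqslant y$ with $d\nleqslant p$. Now the hypothesis applied to the compact pair $c,d$ (in contrapositive form) yields $cd\nleqslant p$. Finally, the monotonicity condition, applied to $c\leqslant x$ and $d\leqslant y$, gives $cd\leqslant xy$, whence $cd\leqslant xy\leqslant p$, contradicting $cd\nleqslant p$. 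This contradiction shows $p$ is prime.

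The only genuinely nontrivial point, and the main obstacle, is manufacturing the compact witnesses $c,d$ below $x,y$ that escape $p$; this is precisely where the algebraic hypothesis is indispensable, since without it one cannot descend from an arbitrary element failing to lie under $p$ to a compact one with the same property. Monotonicity then plays the complementary role of lifting the relation $cd\nleqslant p$ back up to $xy$.
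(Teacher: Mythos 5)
Your argument is correct: the forward direction is immediate from the definition, and for the converse you use algebraicity exactly where it is needed (to extract compact witnesses $c\leqslant x$, $d\leqslant y$ with $c,d\nleqslant p$) and monotonicity to get $cd\leqslant xy\leqslant p$, yielding the contradiction. The paper itself gives no proof here --- it cites \cite[Lemma~6.16]{FFJ} --- but your proof is the standard argument for this statement and matches the role the standing hypotheses (algebraic, monotonicity condition) play in that section.
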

	
	An {\em $m$-system} in $L$ is a nonempty subset $S$ of $C(L)$ such that for every $x,y\in S$ there exists $z\in S$ such that $z\le xy$.
	
\begin{lemma}\label{msystem} An element $p\in L$ is prime if and only if $S_p:=\{\, c\in C(L)\mid c\nleqslant p\,\}$ is an $m$-system in $L$.\end{lemma}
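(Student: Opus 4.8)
The plan is to prove both implications by reducing everything to compact elements via Lemma~\ref{111}, which characterises primeness of an element $p\ne 1$ by the implication $xy\le p\Rightarrow(x\le p$ or $y\le p)$ for $x,y\in C(L)$. First I would record the elementary equivalence $S_p\ne\emptyset\Leftrightarrow p\ne 1$. If $p=1$, then every element, in particular every compact one, lies below $p$, so $S_p=\emptyset$. Conversely, if $S_p=\emptyset$, then every compact element is $\le p$; since $L$ is algebraic we have $1=\bigvee C(L)$, whence $1\le p$ and $p=1$. Thus in both directions I may assume $p\ne 1$, which is the standing hypothesis in Lemma~\ref{111}.

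For the forward implication, assume $p$ is prime and take $x,y\in S_p$, so that $x,y\in C(L)$ with $x\nleq p$ and $y\nleq p$. By the contrapositive of the definition of a prime element, $xy\nleq p$. Now algebraicity gives $xy=\bigvee\{\,c\in C(L)\mid c\le xy\,\}$; if every such $c$ were below $p$, this join would force $xy\le p$, a contradiction. Hence there exists $z\in C(L)$ with $z\le xy$ and $z\nleq p$, i.e.\ $z\in S_p$ with $z\le xy$. This is exactly the $m$-system condition, and combined with the nonemptiness of $S_p$ it shows that $S_p$ is an $m$-system.

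For the converse, assume $S_p$ is an $m$-system. Then $S_p\ne\emptyset$, so $p\ne 1$ by the first step, and Lemma~\ref{111} applies. It therefore suffices to verify its hypothesis, which I argue by contraposition: given $x,y\in C(L)$ with $x\nleq p$ and $y\nleq p$, I must show $xy\nleq p$. Here $x,y\in S_p$, so the $m$-system property yields $z\in S_p$ with $z\le xy$; since $z\nleq p$ while $z\le xy$, we cannot have $xy\le p$. Hence $xy\le p$ forces $x\le p$ or $y\le p$, and Lemma~\ref{111} gives that $p$ is prime. The only real obstacle is the one handled in the forward direction, namely that $xy$ need not be compact even when $x,y$ are; algebraicity of $L$ is exactly what lets me descend from $xy$ to a compact element $z\le xy$ that still escapes $p$, and everything else is a routine translation between the $m$-system formulation and the compact-element characterisation of primeness.
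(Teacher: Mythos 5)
Your proof is correct and follows essentially the same route as the paper's: both directions reduce to compact elements via Lemma~\ref{111}, with algebraicity used in the forward direction to descend from $xy$ to a compact $z\le xy$ not below $p$, and the converse handled by the same (contrapositive versus by-contradiction) application of the $m$-system property. The only difference is that you spell out the justification of $S_p=\emptyset\Leftrightarrow p=1$, which the paper states as obvious.
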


\begin{proof} Clearly, $p=1$ if and only if $S_p=\emptyset$.

Assume $p$ prime, and fix $x,y\in S_p$. Then $x,y\nleqslant p$, so that  $xy\nleqslant p$ because $p$ is prime. Since $L$ is algebraic, there exists a compact element $z\in L$ such that $z\le xy$ and $z\nleqslant p$. Therefore $z\in S_p$.

Conversely, suppose that $S_p$ is an $m$-system. We already know that $p\ne 1$. By Lemma~\ref{111}, in order to show that $p$ is prime we must show that if $x$ and $y$ are compact elements of $L$ with $xy\le p$, then either $x\le p$ or $y\le p$. Assume by contradiction that $x,y\in C(L)$ and $xy\le p$, but $x,y\in S_p$. Since $S_p$ is an $m$-system, there exists $z\in S_p$ such that $z\le xy$. Then $z\le p$, so $z\nin S_p$, a contradiction.
\end{proof}

We will say that  {\em $m$-distributivity} holds in a multiplicative lattice $(L,\vee,\cdot)$ if $$(x\vee y)z=xz\vee yz\quad\mbox{\rm and}\quad x(y\vee z)=xy\vee xz$$ for every $x,y,z\in L$. Notice that when the operation $\cdot$ is the meet $\wedge$, $m$-distributivity holds if and only if the lattice $L$ is a distributive lattice. Hence multiplicative lattices in which $m$-distributivity holds can be viewed as a deformation of bounded distributive lattices.

\begin{lemma}\label{lll} If $m$-distributivity holds in a multiplicative lattice $L$, then the monotonicity condition holds in $L$.\end{lemma}

\begin{proof} Assume that  $m$-distributivity holds in $L$, and suppose $x\le y$, $x'\le y'$, $x,y,x',y'\in L$. Then $x\vee y=y$ and $x'\vee y'=y'$, so $(x\vee y)(x'\vee y')=yy$. By $m$-distributivity, $xx'\vee xy'\vee yx'\vee yy'=yy'$. Thus $xx'\vee xy'\vee yx'\le yy'$. It follows that $xx'\le yy'$, hence the monotonicity condition holds in $L$.
\end{proof}

\begin{remark}{\rm When $\cdot$ is associative and $m$-distributivity holds, the commutative operation $\square$ considered in Section~\ref{square} satisfies the Jordan identity $(x\square y)\square(x\square x)=x\square(y\square(x\square x))$. Jordan algebras are the commutative algebras that satisfy the Jordan identity.}\end{remark}

\begin{proposition}{\rm \cite[Section 5]{FFJ}} Let $L$ be an $m$-distributive lattice in which $1$ is a compact element. The following conditions are equivalent:
\begin{itemize}
		\item [(a)]
$1\cdot 1=1$ in $L$.
\item [(b)]
Every maximal element of $L\setminus\{1\}$ is prime
\item [(c)]
$L$ has enough primes, in the sense that for every $x\neq1\in L$, there exists a prime $p\in L$ with $x\leqslant p$.\end{itemize}\end{proposition}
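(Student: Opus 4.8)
The plan is to establish the cycle of implications $(a)\Rightarrow(b)\Rightarrow(c)\Rightarrow(a)$, drawing on $m$-distributivity in the first step, the compactness of $1$ together with Zorn's lemma in the second, and the bare definition of a prime element in the third; notably, none of this should require $L$ to be algebraic. For $(a)\Rightarrow(b)$, I would take a maximal element $m$ of $L\setminus\{1\}$ and suppose toward a contradiction that $xy\leqslant m$ while $x\nleqslant m$ and $y\nleqslant m$. Maximality of $m$ forces $x\vee m$ and $y\vee m$ to exceed $m$ strictly and hence to equal $1$. Expanding $(x\vee m)(y\vee m)$ by $m$-distributivity yields $xy\vee xm\vee my\vee mm$, and since $xy\leqslant m$ by hypothesis while $xm,my,mm\leqslant m$ by the defining axiom $uv\leqslant u\wedge v$, this join lies below $m$. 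But by $(a)$ it equals $1\cdot 1=1$, forcing $1\leqslant m$, a contradiction; hence $m$ is prime.

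For $(b)\Rightarrow(c)$, given $x\ne 1$ I would apply Zorn's lemma to the poset $T=\{\,y\in L\mid x\leqslant y,\ y\ne 1\,\}$, which is nonempty since $x\in T$. The one point demanding care, and exactly where the compactness of $1$ is indispensable, is that the join of a chain of elements all distinct from $1$ remains distinct from $1$: were that join equal to $1$, compactness would place $1$ below a finite subjoin, hence, the chain being totally ordered, below a single chain element, making that element equal to $1$. Thus every chain in $T$ has an upper bound in $T$, so $T$ has a maximal element $m$; since any $m'\ne 1$ with $m'>m$ would also lie in $T$, this $m$ is maximal in $L\setminus\{1\}$, therefore prime by $(b)$, and it satisfies $x\leqslant m$.

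For $(c)\Rightarrow(a)$, I would argue by contradiction: if $1\cdot 1\ne 1$, then $(c)$ supplies a prime $p$ with $1\cdot 1\leqslant p$, whence primeness gives $1\leqslant p$, i.e.\ $p=1$, contradicting $p\ne 1$; so $1\cdot 1=1$. I expect the genuine obstacle to be the chain-supremum step in $(b)\Rightarrow(c)$, since the whole Zorn argument collapses without the hypothesis that $1$ is compact, which is precisely the device guaranteeing that suprema of proper chains stay proper and that a maximal proper element above $x$ exists. The remaining two implications should be essentially formal once the $m$-distributive expansion and the definition of prime are in hand.
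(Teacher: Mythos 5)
Your proof is correct: the $m$-distributive expansion of $(x\vee m)(y\vee m)$ in (a)$\Rightarrow$(b), the Zorn argument with compactness of $1$ guaranteeing that suprema of chains in $\{\,y\mid x\le y,\ y\ne 1\,\}$ stay proper in (b)$\Rightarrow$(c), and the application of (c) to $x=1\cdot 1$ in (c)$\Rightarrow$(a) are all sound, and you are right that algebraicity of $L$ is not needed. The paper itself gives no proof of this proposition (it is quoted from \cite[Section 5]{FFJ}), but your argument is the standard one that the cited source uses, so there is nothing further to compare.
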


\begin{lemma} Let $(L,\vee,\cdot)$ be a multiplicative lattice, which is algebraic and satisfies the monotonicity condition. An element $s\in L$ is semiprime if and only if $c^2\le s$ implies $c\le s$
	for every compact element $c$ of $L$.\end{lemma}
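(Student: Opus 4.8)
The plan is to prove the nontrivial implication directly, since the other direction is immediate: if $s$ is semiprime, then $x^2\le s\Rightarrow x\le s$ holds for \emph{every} $x\in L$, and in particular for every compact element $c\in C(L)$.

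For the converse, suppose that $c^2\le s$ implies $c\le s$ for every $c\in C(L)$, and let $x\in L$ satisfy $x^2\le s$. First I would invoke algebraicity to write $x=\bigvee\{\,c\in C(L)\mid c\le x\,\}$. It then suffices to show that each compact $c\le x$ satisfies $c\le s$, for then $x$, being the join of these compacts, will satisfy $x\le s$ as well.

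The key step is to transfer the hypothesis $x^2\le s$ down to each such $c$. Fix a compact $c\le x$. Since $c\le x$ and $c\le x$, the monotonicity condition yields $c^2=c\cdot c\le x\cdot x=x^2\le s$. Hence $c^2\le s$, and by the standing assumption $c\le s$. Taking the join over all compact $c\le x$ gives $x=\bigvee\{\,c\in C(L)\mid c\le x\,\}\le s$, which proves that $s$ is semiprime.

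The argument mirrors the combined use of algebraicity and monotonicity in Lemma~\ref{msystem}, and no genuine obstacle arises. The one point requiring care is the application of the monotonicity condition: it is precisely what allows one to pass from the global inequality $x^2\le s$ to the local inequalities $c^2\le s$ on the compact approximants of $x$, while algebraicity is what lets these approximants be reassembled to recover $x$. Without monotonicity the inequality $c\le x$ would not propagate to $c^2\le x^2$, so this hypothesis is essential to the reduction to compact elements.
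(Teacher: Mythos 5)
Your proof is correct and uses the same two ingredients in the same roles as the paper's: algebraicity to reduce to compact elements below $x$, and monotonicity to pass between $c^2$ and $x^2$. The only difference is that you argue directly ($x^2\le s\Rightarrow$ every compact $c\le x$ has $c^2\le s$, then rejoin), while the paper argues contrapositively (from $x\nleq s$ pick a single compact $c_0\le x$ with $c_0\nleq s$ and deduce $x^2\nleq s$), which is an inessential reformulation.
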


\begin{proof} Suppose that $c^2\le s$ implies $ c\le s$
	for every compact element $c\in L$, and let $x\in L$ be an element such that $x\nleqslant s$. Then there exists a compact element $c_0\in L$ such that $c_0\le x$ and $c_0\nleqslant s$. Then $c_0^2\nleqslant s$. By the monotonicity condition, $c_0^2\le x^2$. Hence $x^2\nleqslant s$. This proves that $s$ is semiprime.\end{proof}
	
	{\em For the rest of this Section we will suppose that $(L,\vee,\cdot)$ is a multiplicative lattice, which is algebraic (every element of $L$ is the join of compact elements) and in which $m$-distributivity holds.}

\begin{lemma}\label{3'} Let $L$ be an $m$-distributive multiplicative lattice. Then an element of $L$ is prime if and only if it is a meet-irreducible semiprime element of $L$.\end{lemma}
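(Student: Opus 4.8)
The plan is to prove the equivalence in Lemma~\ref{3'} by establishing each direction separately. One direction is already essentially done: by Lemma~\ref{3}, every prime element of a multiplicative lattice is both meet-irreducible and semiprime, and this holds with no extra hypotheses, so in particular it holds in an $m$-distributive lattice. Thus the forward implication requires no further work. The content of the lemma is therefore entirely in the converse: I must show that if $s\in L$ is a meet-irreducible semiprime element, then $s$ is prime. Note that $m$-distributivity is in force, so by Lemma~\ref{lll} the monotonicity condition holds, and I am free to use the compact-element characterizations of primeness (Lemma~\ref{111}) and of semiprimeness (the preceding Lemma).

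First I would fix a meet-irreducible semiprime element $s$ and observe that $s\ne 1$, since $1$ is never meet-irreducible in the relevant sense (or equivalently, the whole point is to show $s\in\Spec(L)$, and $1\notin\Spec(L)$ by definition; meet-irreducibility excludes the trivial case). By Lemma~\ref{111} it suffices to take compact elements $x,y\in C(L)$ with $xy\le s$ and deduce that $x\le s$ or $y\le s$. The key move is to consider the elements $x\vee s$ and $y\vee s$ and examine their meet. Using $m$-distributivity I would compute the product $(x\vee s)(y\vee s)$, expanding it as $xy\vee xs\vee sy\vee s^2$. Each summand should be shown to lie below $s$: the term $xy\le s$ by hypothesis; the terms $xs$ and $sy$ satisfy $xs\le s$ and $sy\le s$ because multiplication is bounded above by the meet; and $s^2=s\cdot s\le s$ likewise. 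Hence $(x\vee s)(y\vee s)\le s$.

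The next step is to bound the meet $(x\vee s)\wedge(y\vee s)$ in terms of $s$. Since $x\vee s\ge s$ and $y\vee s\ge s$, their product $(x\vee s)(y\vee s)$ is $\le s$, and I want to leverage the semiprime property to climb back up to the meet. The cleanest route is to set $w:=(x\vee s)\wedge(y\vee s)$ and note $w^2=w\cdot w\le (x\vee s)(y\vee s)\le s$ by monotonicity (here I use $w\le x\vee s$ and $w\le y\vee s$). Because $s$ is semiprime, $w^2\le s$ forces $w\le s$, that is, $(x\vee s)\wedge(y\vee s)\le s$. Combined with the reverse inequality $s\le (x\vee s)\wedge(y\vee s)$ (clear since $s\le x\vee s$ and $s\le y\vee s$), this yields $(x\vee s)\wedge(y\vee s)=s$.

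Finally I invoke meet-irreducibility of $s$: from $s=(x\vee s)\wedge(y\vee s)$ it follows that $s=x\vee s$ or $s=y\vee s$, i.e. $x\le s$ or $y\le s$, which is exactly what Lemma~\ref{111} requires to conclude that $s$ is prime. The main obstacle, and the step deserving the most care, is the passage from $(x\vee s)(y\vee s)\le s$ to the meet equality: I must be sure that the monotonicity condition legitimately gives $w^2\le(x\vee s)(y\vee s)$, and that the semiprime hypothesis is applied to the genuine lattice element $w$ rather than only to compact elements. Since monotonicity holds by Lemma~\ref{lll} and the definition of semiprime quantifies over all $x\in L$, both applications are valid; the compact elements $x,y$ are needed only at the very start via Lemma~\ref{111} to reduce the primeness check, and everywhere else I work with arbitrary elements without difficulty.
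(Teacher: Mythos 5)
Your proof is correct and follows essentially the same route as the paper: expand $(x\vee s)(y\vee s)$ by $m$-distributivity, bound the square of the meet $(x\vee s)\wedge(y\vee s)$ by it via monotonicity, and then apply semiprimeness and meet-irreducibility; the paper merely phrases this in contrapositive form ($a\nleqslant p$, $b\nleqslant p$ imply $ab\nleqslant p$) and works with arbitrary elements, so your preliminary reduction to compact elements via Lemma~\ref{111} is harmless but unnecessary. The only point neither you nor the paper fully disposes of is the trivial case $s=1$, which is meet-irreducible and semiprime under the paper's definitions yet excluded from $\Spec(L)$ by fiat.
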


\begin{proof} One implication has been already seen in Lemma~\ref{3}. For the other, assume that $m$-distributivity holds in $L$, that $p$ is a meet-irreducible semi\-prime element of $L$, that $a,b\in L$, $a\nleqslant p$ and $b\nleqslant p$. To prove that $p$ is prime, we must show that  $ab\nleqslant p$. Now $a\vee p>p$ and $b\vee p>p$. Since $p$ is meet-irreducible, $(a\vee p)\wedge(b\vee p)>p$. As $p$ is semiprime, it follows that $((a\vee p)\wedge(b\vee p))^2 \nleqslant   p$. But $((a\vee p)\wedge(b\vee p))^2=((a\vee p)\wedge(b\vee p))((a\vee p)\wedge(b\vee p))\le(a\vee p)(b\vee p)=ab\vee ap\vee pb\vee p^2\le ab\vee p$. It follows that $ab\nleqslant p$. 
\end{proof}

The next proposition is the analogue, for multiplicative lattices, of the fact that an ideal of a commutative ring $R$ maximal with respect to the property of being disjoint from a multiplicatively closed subset of $R$, is a prime ideal of $R$.

\begin{proposition}\label{max} Let $S$ be an $m$-system in $L$ and $p\in L$ be a maximal element in the set $\{\,y\in L\mid S\cap\{\, c\in C(L)\mid c\le y\,\}=\emptyset\,\}$. Then $p$ is a prime element of $L$.\end{proposition}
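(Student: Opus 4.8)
The plan is to reduce primality to a statement about compact elements via Lemma~\ref{111}, and then to combine the maximality of $p$ with the defining property of the $m$-system $S$, using $m$-distributivity for the one nontrivial estimate. Since $m$-distributivity holds in $L$, the monotonicity condition holds by Lemma~\ref{lll}; thus $L$ is an algebraic multiplicative lattice satisfying the monotonicity condition, and Lemma~\ref{111} is at our disposal. Before the main argument I would dispose of the trivial point $p\ne 1$: as $S$ is nonempty, any $s\in S\subseteq C(L)$ satisfies $s\le 1$, so $S\cap\{\,c\in C(L)\mid c\le 1\,\}\ne\emptyset$; hence $1$ does not belong to the set $\{\,y\in L\mid S\cap\{\, c\in C(L)\mid c\le y\,\}=\emptyset\,\}$, and in particular $p\ne 1$.

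For the core of the proof I would invoke Lemma~\ref{111}, which reduces primality of $p$ to the implication: for all compact $x,y\in C(L)$, $xy\le p$ forces $x\le p$ or $y\le p$. This reduction is precisely what makes $S$ usable, since $S$ consists of compact elements. I would argue by contraposition: take $x,y\in C(L)$ with $x\nleqslant p$ and $y\nleqslant p$, and aim to show $xy\nleqslant p$. From $x\nleqslant p$ we get $x\vee p>p$, so by maximality of $p$ the element $x\vee p$ lies outside the defining set, i.e. there is $s_1\in S$ with $s_1\le x\vee p$; symmetrically there is $s_2\in S$ with $s_2\le y\vee p$. Because $S$ is an $m$-system, I may choose $s_3\in S$ with $s_3\le s_1s_2$.

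The key computation is to bound the product $(x\vee p)(y\vee p)$. Using $m$-distributivity twice gives $(x\vee p)(y\vee p)=xy\vee xp\vee py\vee p^2$, and the defining axiom $ab\le a\wedge b$ shows that each of $xp$, $py$, $p^2$ is $\le p$, whence $(x\vee p)(y\vee p)\le xy\vee p$. Applying the monotonicity condition to $s_1\le x\vee p$ and $s_2\le y\vee p$ then yields $s_3\le s_1s_2\le (x\vee p)(y\vee p)\le xy\vee p$. If we had $xy\le p$, then $xy\vee p=p$ and so $s_3\le p$, contradicting the fact that $p$ lies in the defining set (no compact element of $S$ is below $p$). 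Therefore $xy\nleqslant p$, and Lemma~\ref{111} concludes that $p$ is prime. I do not anticipate a genuine obstacle: the only step demanding care is the expansion of $(x\vee p)(y\vee p)$ by $m$-distributivity and the observation that the three ``cross'' terms collapse below $p$, which is exactly what converts the $m$-system property into a contradiction.
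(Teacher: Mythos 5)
Your proof is correct and follows essentially the same route as the paper's: reduce to compact elements via Lemma~\ref{111} (using Lemma~\ref{lll} for monotonicity), use maximality of $p$ to find elements of $S$ below $x\vee p$ and $y\vee p$, expand $(x\vee p)(y\vee p)$ by $m$-distributivity to get it below $xy\vee p$, and let the $m$-system property produce a compact element of $S$ below $p$, a contradiction. Your explicit check that $p\ne 1$ is a small point the paper leaves implicit, but otherwise the two arguments coincide (yours phrased as contraposition, the paper's as contradiction).
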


\begin{proof} Assume by contradiction that $S$ is an $m$-system, that $p\in L$ is a maximal element in the set $T:=\{\,y\in L\mid S\cap\{\, c\in C(L)\mid c\le y\,\}=\emptyset\,\}$, but that $p$ is not a prime element. By Lemma~\ref{lll} we can apply Lemma~\ref{111}, so that there exist $x,y\in C(L)$ such that $x\nleqslant p$, $y\nleqslant p$, but $xy\le p$. Thus $x\vee p>p$ and $y\vee p>p$. By the maximality of $p$, one has that $S\cap\{\, c\in C(L)\mid c\le x\vee p\,\}\ne \emptyset$ and $S\cap\{\, c\in C(L)\mid c\le y\vee p\,\}\ne \emptyset$. Hence there exist $x',y'\in S$ such that $x'\le x\vee p$ and $y'\le y\vee p$. By the monotonicity condition, $x'y'\le(x\vee p)(y\vee p)$, hence $x'y'\le xy\vee py\vee xp\vee pp$, so $x'y'\le p$. Since $S$ is an $m$-system, there exists $z\in S$ such that $z\le x'y'$. Thus $z\le p$. Hence $z\in S\cap \{\, c\in C(L)\mid c\le p\,\}$, which is therefore not empty, so that $p$ is not an element in the set $T$. This is a contradiction, because  $p$ was a maximal element in $T$.
\end{proof}

\begin{lemma} \label{bjip} Let $I$ and $V$ be the mappings defined at the beginning of Section~\ref{2}, and let $x$ be an element of $L$. If $x<IV(x)$, then $x$ is not semiprime.\end{lemma}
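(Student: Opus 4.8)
The plan is to prove the contrapositive: I will show that if $x$ is semiprime then $x=IV(x)$. Since $I$ is the right adjoint of $V$, one always has $x\le IV(x)$, so the hypothesis $x<IV(x)$ is just $x\ne IV(x)$, and it suffices to prove the reverse inequality $IV(x)\le x$ under the assumption that $x$ is semiprime. Because $L$ is algebraic, $IV(x)=\bigvee\{\,c\in C(L)\mid c\le IV(x)\,\}$, so it is enough to show that every compact $c\le IV(x)$ already satisfies $c\le x$. Unwinding the definitions, $IV(x)=\bigwedge\{\,p\in\Spec(L)\mid x\le p\,\}$, so $c\le IV(x)$ means $c\le p$ for every prime $p\ge x$. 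Hence the whole statement reduces to the claim that \emph{for every compact $c$ with $c\nleqslant x$ there is a prime $p\ge x$ with $c\nleqslant p$.}

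The heart of the argument, and the step I expect to be the main obstacle, is to manufacture an $m$-system out of $c$ so that Proposition~\ref{max} can be applied. I would build a descending sequence of compact elements recursively, starting from $c_0:=c$. Given a compact $c_n\nleqslant x$, the characterization of semiprimeness by compact elements proved just above (applicable here, since $m$-distributivity forces the monotonicity condition by Lemma~\ref{lll}) gives $c_n^2\nleqslant x$. The subtlety is that $c_n^2$ need not itself be compact, so one cannot simply iterate squaring; instead I use algebraicity to choose a compact $c_{n+1}\le c_n^2$ with $c_{n+1}\nleqslant x$. Since $c_{n+1}\le c_n^2\le c_n$, the sequence descends, and each $c_n$ is compact with $c_n\nleqslant x$. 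Set $S:=\{\,c_n\mid n\ge 0\,\}$; verifying that $S$ is an $m$-system is where the monotonicity condition is used: for $c_i,c_j\in S$ with $i\le j$ one gets $c_{j+1}\le c_j^2=c_jc_j\le c_ic_j$ and likewise $c_{j+1}\le c_jc_i$, so $c_{j+1}\in S$ lies below both products. Thus $S\subseteq C(L)$ is an $m$-system containing $c$.

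Finally I apply Proposition~\ref{max}. Consider $T:=\{\,y\in L\mid S\cap\{\,d\in C(L)\mid d\le y\,\}=\emptyset\,\}$. Each $c_n\nleqslant x$, so no element of $S$ lies below $x$ and hence $x\in T$. Using compactness, the join of any chain in $\{\,y\in T\mid y\ge x\,\}$ again lies in $T$, since a compact element below the join would already lie below a single member of the chain; Zorn's lemma therefore yields a $p$ maximal in $T$ among the elements $\ge x$, and such a $p$ is automatically maximal in all of $T$. By Proposition~\ref{max}, $p$ is prime. Moreover $p\ge x$, and since $p\in T$ while $c=c_0\in S$ is compact, the relation $c\le p$ would put $c$ in $S\cap\{\,d\in C(L)\mid d\le p\,\}$, a contradiction; hence $c\nleqslant p$. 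This establishes the reduced claim, so $IV(x)\le x$, giving $x=IV(x)$ and proving the contrapositive of the lemma.
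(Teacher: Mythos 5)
Your proposal is correct and uses essentially the same argument as the paper: the descending chain of compact elements $c_{n+1}\le c_n^2$ obtained from semiprimeness and algebraicity, the resulting $m$-system $S$, and Zorn's lemma together with Proposition~\ref{max} applied to $T$. The only differences are cosmetic — you phrase it as a direct proof of the contrapositive starting from an arbitrary compact $c\nleqslant x$, whereas the paper argues by contradiction starting from a compact $c_0\le IV(x)$ with $c_0\nleqslant x$ — and your appeal to the compact-element characterization of semiprimeness is unnecessary (the definition applied to $c_n$ itself already gives $c_n^2\nleqslant x$), but harmless.
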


\begin{proof} Let $x$ be an element of $L$. Assume by contradiction that $s:=IV(x)>x$ and $x$ is semiprime, so that $y^2\le x$ implies $y\le x$ for every element $y\in L$. From $x<s$ and the fact that $L$ is an algebraic lattice, it follows that there exists an element $c_0\in C(L)$ such that $c_0\le s$ and $c_0\nleqslant x$. Then $c_0^2\nleqslant x$, so that here exists an element $c_1\in C(L)$ such that $c_1\le c_0^2$ and $c_1\nleqslant x$. In this way it is possible to construct a descending chain $c_0\ge c_1\ge c_2\ge\dots$ of elements of $C(L)$ such that $c_n\nleqslant x$ and $c_{n+1}\le c_n^2$ for every $n\ge 0$.
Set $S:=\{\, c_n\mid n\ge 0\,\}$. The nonempty set $S$ is an $m$-system in $L$, because if $n,m$ are natural numbers and $\ell$ is the greatest between them, then $c_nc_m\ge c_{\ell}c_{\ell}\ge c_{\ell+1}$. Notice that $x$ belongs to the set $T:=\{\,y\in L\mid S\cap\{\, c\in C(L)\mid c\le y\,\}=\emptyset\,\}$. It is possible to apply Zorn's Lemma to the nonempty set $U:=\{\,y\in T\mid x\le y\,\}$, because if $\{\,u_\lambda\mid \lambda\in\Lambda\,\}$ is a nonempty chain in $U$, then $u:=\bigvee_{\lambda\in\Lambda}u_\lambda\in L$ is in $U$, because if $S\cap\{\, c\in C(L)\mid c\le u\,\}\neq\emptyset$, there exists $c\in S$ such that $c\le u=\bigvee_{\lambda\in\Lambda}u_\lambda\in L$. But $c$ is compact, hence there exists $\lambda_0\in\Lambda$ such that $c\le u_{\lambda_0}$. Thus $S\cap\{\, c\in C(L)\mid c\le u_{\lambda_0}\,\}\ne\emptyset$, which is a contradiction. This shows that it is possible to apply Zorn's Lemma to $U$, hence there is a maximal element $p$ in $U$, which is a maximal element in $T$, hence $p$ is prime by Proposition~\ref{max}. Now $p\in U$ implies $x\le p$, so $p\in V(x)$. Therefore $s\le p$. Since $p\in U$, we know that $S\cap\{\, c\in C(L)\mid c\le p\,\}=\emptyset$. But 
$c_0\le s\le p$ and $c_0\in S$. This contradiction proves the Lemma.
\end{proof}

\begin{proposition}\label{char} The following conditions are equivalent for an element $t$ of $L$:

{\rm (a)} $t=\bigwedge P$ for some set $P$ of prime elements of $L$.

{\rm (b)} $t$ is semiprime.\end{proposition}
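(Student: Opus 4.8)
The plan is to prove the two implications separately, with the forward direction being essentially immediate and the reverse direction being the substantive one.

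For the implication (a) $\Rightarrow$ (b), I would argue as follows. Suppose $t=\bigwedge P$ for a set $P$ of prime elements. By Lemma~\ref{3}, each prime $p\in P$ is semiprime, so I want to show that a meet of semiprime elements is again semiprime. This is a direct verification: if $x^2\le t=\bigwedge P$, then $x^2\le p$ for every $p\in P$, and since each $p$ is semiprime, $x\le p$ for every $p\in P$, whence $x\le\bigwedge P=t$. Thus $t$ is semiprime, and no further hypotheses are needed here. I should also account for the degenerate case $P=\emptyset$, where $\bigwedge\emptyset=1$, and check that $1$ is trivially semiprime (indeed $x^2\le 1$ always holds, and $x\le 1$ always holds), so the statement is consistent with $t=1$.

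For the implication (b) $\Rightarrow$ (a), I would use the mappings $I$ and $V$ from Section~\ref{2} together with Lemma~\ref{bjip}. Recall that $V(t)=\{\,p\in\Spec(L)\mid t\le p\,\}$ and $I(V(t))=\bigwedge V(t)$. The key point is that, since $I$ is the right adjoint of $V$, we always have $t\le IV(t)$; this is the unit of the adjunction. Now if $t$ is semiprime, Lemma~\ref{bjip} tells us (in contrapositive form) that we cannot have $t<IV(t)$, so in fact $t=IV(t)=\bigwedge V(t)$. Taking $P:=V(t)$, a set of prime elements of $L$, exhibits $t$ as a meet of primes, which is exactly condition (a).

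The main step to get right is the inequality $t\le IV(t)$, i.e.\ that the element $t$ lies below the meet of all primes above it; this is what makes the adjunction-theoretic argument work, and it holds by Lemma~\ref{1.1} applied to the adjoint pair $(V,I)$. Given that and Lemma~\ref{bjip}, the reverse implication is short. The genuine content has effectively been front-loaded into Lemma~\ref{bjip}, whose proof builds an $m$-system and applies Proposition~\ref{max} to produce a prime separating a non-semiprime element from $IV(t)$; so the main obstacle in the overall development is not in this proposition but in that preceding lemma, which I am permitted to assume here. I would therefore expect the proof of Proposition~\ref{char} itself to be brief, amounting to the observation that semiprimeness is preserved under arbitrary meets (for (a)$\Rightarrow$(b)) and that Lemma~\ref{bjip} forces equality in the unit $t\le IV(t)$ (for (b)$\Rightarrow$(a)).
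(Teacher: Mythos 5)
Your proposal is correct and follows essentially the same route as the paper: the forward direction is the same direct verification that a meet of primes is semiprime, and the reverse direction likewise takes $P:=V(t)$ and invokes Lemma~\ref{bjip} to force $t=IV(t)$. Your added remarks (the unit inequality $t\le IV(t)$ from the adjunction, and the degenerate case $P=\emptyset$ giving $t=1$) are sound elaborations of steps the paper leaves implicit.
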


\begin{proof} Suppose that (a) holds, $x\in L$ and $x^2\le t$. Then $x^2\le p$ for every $p\in P$, so that $x\le p$. It follows that $x\le t$, as desired. This proves that $t$ is semiprime.

Conversely, suppose that (b) holds. Let $P:=V(x)$ be the set of all prime elements $p$ of $L$ such that $t\le p$. By Lemma~\ref{bjip}, we get from (b) that $t:=\bigwedge P$.
\end{proof}

The {\em semiprime radical} of $L$ is the meet of all prime elements of $L$. The multiplicative lattice $L$ is {\em semisimple} if its semiprime radical is $0$. The complete distributive lattice $\Rad(L)$ introduced in Section~\ref{2} consists of all semiprime elements of $L$.

\begin{theorem}\label{hyper} Let $(L,\vee,\cdot)$ be an algebraic multiplicative lattice in which $m$-distributivity holds. The following conditions are equivalent:

{\rm (a)} $1$ is the unique semiprime element of $L$.

{\rm (b)} For every $x\in L$, $x\ne 1$, there exists $y\in L$ such that $y>x$ and $y^2\le x$.

{\rm (c)} There exists a strictly ascending chain $$0:=x_0\le x_1\le x_2\le\dots\le x_{\omega}\le x_{\omega+1}\le \dots\le x_{\alpha}:=1$$ in $L$  indexed in the ordinal numbers less or equal to  $\alpha$ for some ordinal $\alpha$, such that $ x_{\beta+1}^2\le x_{\beta}$ for every ordinal $\beta<\alpha$ and $x_\gamma=\bigvee_{\beta<\gamma}x_\beta$ for every limit ordinal $\gamma\le\alpha$.

{\rm (d)} The lattice $L$ has no prime elements, that is, $\Spec(L)=\emptyset$.

{\rm (e)}  The semiprime radical of $L$ is $1$.

{\rm (f)}  Every $m$-system of $L$ contains $0$.\end{theorem}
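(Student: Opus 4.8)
The plan is to prove the chain of implications $(a)\Rightarrow(b)\Rightarrow(c)\Rightarrow(d)\Rightarrow(a)$, which renders the first four conditions equivalent, and then to hang $(e)$ and $(f)$ onto the hub $(d)$ by establishing $(d)\Leftrightarrow(e)$ and $(d)\Leftrightarrow(f)$ separately. Throughout I would lean on Proposition~\ref{char} (semiprime $=$ meet of primes), Lemma~\ref{3} (primes are semiprime), Lemma~\ref{msystem} and Proposition~\ref{max} (relating primes and $m$-systems), and the Zorn-type argument already carried out in Lemma~\ref{bjip}.

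I would first dispatch $(a)\Leftrightarrow(b)$, which is essentially a reformulation. Failure of semiprimeness of $x$ means precisely that there is some $z$ with $z^2\le x$ but $z\nleqslant x$; given such a $z$, put $y:=x\vee z$, so that $y>x$, and expand $y^2=(x\vee z)(x\vee z)$ using $m$-distributivity to get $y^2=x^2\vee xz\vee zx\vee z^2$, each summand being $\le x$, whence $y^2\le x$. Conversely any $y>x$ with $y^2\le x$ witnesses that $x$ is not semiprime. Since $1$ is always semiprime, condition $(a)$ (that $1$ is the only semiprime element) says exactly that every $x\ne 1$ fails to be semiprime, and this is precisely $(b)$.

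The core of the argument is $(b)\Rightarrow(c)$, which I would establish by transfinite recursion. Set $x_0:=0$; at a successor stage, if $x_\beta\ne 1$ use $(b)$ to choose $x_{\beta+1}>x_\beta$ with $x_{\beta+1}^2\le x_\beta$, and if $x_\beta=1$ stop; at a limit stage put $x_\gamma:=\bigvee_{\beta<\gamma}x_\beta$. The successor steps increase strictly and each limit join dominates all earlier terms, so the resulting chain is strictly ascending; since a strictly ascending ordinal-indexed chain cannot run through all ordinals inside the set $L$, the recursion halts at some $\alpha$ with $x_\alpha=1$, and by construction $x_{\beta+1}^2\le x_\beta$ for all $\beta<\alpha$ and $x_\gamma=\bigvee_{\beta<\gamma}x_\beta$ at limits, which is exactly $(c)$. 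For $(c)\Rightarrow(d)$ I would argue by contradiction: if $p$ were prime, a transfinite induction shows $x_\beta\le p$ for all $\beta$, the base case being $0\le p$, the successor step using that $x_{\beta+1}^2\le x_\beta\le p$ forces $x_{\beta+1}\le p$ because primes are semiprime (Lemma~\ref{3}), and limits being handled by the join; then $1=x_\alpha\le p$ contradicts $p\ne 1$.

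It remains to close the cycle and attach the last two conditions. For $(d)\Rightarrow(a)$, Proposition~\ref{char} identifies the semiprime elements with the meets of sets of primes, so when $\Spec(L)=\emptyset$ the only such meet is $\bigwedge\emptyset=1$, making $1$ the unique semiprime. For $(d)\Leftrightarrow(e)$, the semiprime radical is $\bigwedge\Spec(L)$, which equals $1$ exactly when $\Spec(L)=\emptyset$, since any prime $p$ would force the radical to be $\le p<1$. For $(d)\Leftrightarrow(f)$: if $p$ is prime then $S_p=\{c\in C(L)\mid c\nleqslant p\}$ is an $m$-system (Lemma~\ref{msystem}) not containing $0$, so $(f)$ fails; conversely, given an $m$-system $S$ with $0\notin S$, the set $T:=\{y\in L\mid S\cap\{c\in C(L)\mid c\le y\}=\emptyset\}$ contains $0$ and is closed under joins of chains by the compactness argument of Lemma~\ref{bjip}, so Zorn yields a maximal element of $T$, which is prime by Proposition~\ref{max}, and hence $(d)$ fails. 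I expect the main obstacle to be the transfinite recursion in $(b)\Rightarrow(c)$: one must combine the (choice-dependent) selections at successor stages, the behaviour at limit ordinals, and the cardinality bound forcing termination so that the chain is genuinely strictly ascending, actually reaches $1$, and satisfies the square condition at every successor.
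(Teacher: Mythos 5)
Your proof is correct, and most of it runs parallel to the paper's: the same construction $y:=z\vee x$ for (a)$\Rightarrow$(b), the same transfinite recursion for (b)$\Rightarrow$(c), the same use of Proposition~\ref{char} for (d)$\Rightarrow$(a), and the same Zorn/$m$-system machinery (Lemma~\ref{msystem}, Proposition~\ref{max}, the chain argument from Lemma~\ref{bjip}) to tie in (f). The organisational difference --- you prove (d)$\Leftrightarrow$(e) and (d)$\Leftrightarrow$(f) directly, whereas the paper closes a single cycle (a)$\Rightarrow$(e)$\Rightarrow$(f)$\Rightarrow$(d) --- is cosmetic.

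The one genuinely different step is (c)$\Rightarrow$(d), and there your argument is simpler than the paper's. You run a single transfinite induction showing $x_\beta\le p$ for every $\beta\le\alpha$: the base case is $0\le p$, the successor step uses $x_{\beta+1}^2\le x_\beta\le p$ together with the fact that primes are semiprime (Lemma~\ref{3}), and limits are handled by the join; then $1=x_\alpha\le p$ contradicts $p\ne 1$. The paper instead inducts on the ordinal $\alpha$ itself, splits into the cases $\alpha$ limit and $\alpha$ nonlimit, and in the nonlimit case descends to the multiplicative sublattice $[0,x_{\alpha_0}]$, where $x_{\alpha_0}\wedge p$ is shown to be prime, contradicting the inductive hypothesis. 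Your version avoids both the case split and the need to observe that closed intervals $[0,x]$ are multiplicative sublattices; it buys a shorter proof at no cost in generality. Your explicit remark that a strictly ascending ordinal-indexed chain in a set must terminate (so the recursion in (b)$\Rightarrow$(c) actually reaches $1$) is also a point the paper leaves implicit, and it is worth stating.
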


\begin{proof} (a)${}\Rightarrow{}$(b) Suppose that (a) holds. Let $x$ be an element of $L$, $x\ne 1$. By (a), $x$ is not semiprime. Hence there exists $z\in L$ such that  $z^2\le x$ and $z\nleqslant x$. Then $y:=z\vee x>x$ is such that $y^2=z^2\vee zx\vee xz\vee x^2\le z^2\vee x=x$. This proves (b).

 (b)${}\Rightarrow{}$(c) Suppose that (b) holds. The constructions of the strictly ascending chain of elements $x_\beta$ of $L$ is by transfinite induction. Set $x_0:=0$. Assume that $x_\beta$ has been defined. If $x_\beta=1$, then $\beta$ is the required ordinal $\alpha$, and the construction stops. If $ x_\beta<1$, by (b) there exists $x_{\beta+1}\in L$ such that $x_{\beta+1}>x_{\beta}$ and $x_{\beta+1}^2\le x_{\beta}$. Finally, if $\gamma$ is a limit ordinal, set $x_\gamma:=\bigvee_{\beta<\gamma}x_\beta$.
 
  (c)${}\Rightarrow{}$(d) Transfinite induction on the ordinal number $\alpha$. If $\alpha=0$, then $L=\{0\}$, and $L$ has no prime elements. 
  
  Notice that, for every $x\in L$, the closed interval $[0,x]:=\{\,y\in L\mid y\le x\,\}$ is a multiplicative sublattice of $L$, because $[0,x]$ is a complete sublattice, which is closed under multiplication because if $y,y'\in[0,x]$, then $yy'\le y\wedge y'\le x\wedge x=x$. (But notice that the greatest element $x$ of $[0,x]$ is in general different from the greatest element $1$ of $L$.)
  
  Suppose $p\in L$ is a prime element of $L$. There are two cases: $\alpha$ limit or $\alpha$ nonlimit.
  
  If $\alpha$ is a limit ordinal, then $1=\bigvee_{\beta<\alpha}x_\beta$. Since $p<1$, there exists $\beta_0<\alpha$ such that $x_{\beta_0}\nleqslant p$. Let $\beta_1<\alpha$ be the smallest ordinal such that $x_{\beta_1}\nleqslant p$. Then $x_\gamma\le p$ for every $\gamma<\beta_1$. Notice that $\beta_1$ must be necessarily a nonlimit ordinal, so $\beta_1=\beta_2+1$ for some ordinal $\beta_2$. Thus $x_{\beta_2}\le p$ and $x_{\beta_2+1}\nleqslant p$. But $x_{\beta_2+1}^2\le x_{\beta_2}\le p$ implies  $x_{\beta_2+1}\le p$, a contradiction. This proves that $\alpha$ must be a nonlimit ordinal, $\alpha=\alpha_0+1$ say. Then $x_{\alpha_0}\nleqslant p$, otherwise $1\cdot 1=x_{\alpha_0+1}\cdot x_{\alpha_0+1}\le x_{\alpha_0}\le p$, hence $1\le p$, contradiction. Thus  $x_{\alpha_0}\wedge p< x_{\alpha_0}$, and the element $x_{\alpha_0}\wedge p$ of the interval $[0, x_{\alpha_0}]$ is prime in $[0, x_{\alpha_0}]$, because if $a,b\in [0, x_{\alpha_0}]$ and $ab\le p$, then either $a\le p$ or $b\le p$, so that either $a\le x_{\alpha_0}\wedge p$ or $b\le x_{\alpha_0}\wedge  p$. This contradicts the inductive hypothesis.
  
   (d)${}\Rightarrow{}$(a) Follows immediately from the characterization of semiprime elements as intersections of primes (Proposition~\ref{char}).
   
    (a)${}\Rightarrow{}$(e) Follows immediately from the definition of semiprime radical as the meet of all prime elements.
    
     (e)${}\Rightarrow{}$(f) Suppose that there exists an $m$-system $S$ in $L$ such that $0\nin S$. We will apply Proposition~\ref{max} showing that the set $T:=\{\,y\in L\mid S\cap\{\, c\in C(L)\mid c\le y\,\}=\emptyset\,\}$ has a maximal element, hence $L$ has a prime element, which contradicts (e). Now $0\in T$ because $0\nin S$. Hence it suffices to apply Zorn's Lemma, that is, it suffices to show that $T$ is inductive. The proof of this is exactly the same as the proof of Lemma~\ref{bjip}.
     
      (f)${}\Rightarrow{}$(d) If $p$ is a prime element of $L$, then by Lemma~\ref{msystem} the set $S_p:=\{\, c\in C(L)\mid c\nleqslant p\,\}$ is an $m$-system in $L$, and $0\nin S_p$.
      \end{proof}

We say that a multiplicative lattice $L$ is {\em hyperabelian} if it satisfies the equivalent conditions of Theorem~\ref{hyper}.

\subsection{Infinite $m$-distributivity} In this subsection we consider {\em a multiplicative lattice $L$ in which $\left(\bigvee X\right)\left(\bigvee Y\right)=\bigvee_{(x,y)\in X\times Y}x y$ for any two subsets $X,Y\subseteq L$ (infinite $m$-distributivity both on the right and on the left).}

For such a multiplicative lattice $L$, it is possible to consider, for any element $x\in L$, the {\em right annihilator} $\rann_L(x)$ of $x$. It is the join of the set $A$ of all elements $y\in L$ such that $xy=0$. Clearly, $\rann_L(x)$ is the greatest element in this set $A$. Similarly for the {\em left annihilator} $\lann_L(x):=\bigvee\{\,y\in L\mid yx=0\,\}$ of $x$. For instance, if $L:=\Cal N(G)$ is the multiplicative lattice of all normal subgroups of a group $G$, then, for every $N\in\Cal N(G)$, $\rann_L(N)=\lann_L(N)$ is the centralizer of $N$ in $L$. It is also possible to define the {\em right center} $rZ(x):=x\wedge \rann_L(x)$ and the
{\em left center} $lZ(x):=x\wedge \lann_L(x)$ of any element $x\in L$. 

The {\em left upper central series} (or {\em left ascending central series}) of the lattice $L$ is the increasing sequence $0:=z_0\le z_1\le\dots\le z_{\omega}\le\dots$ of elements of $L$, indexed in the ordinal numbers, where, for every ordinal $\gamma$, $z_{\gamma+1}$ is the greatest of the elements $z\in L$ such that $z\cdot 1\le z_{\gamma}$. If $\gamma$ is a limit ordinal, then $z_{\gamma}$ is the join of  the set of all the elements $z_{\beta}$ with $\beta\le\gamma$. The {\em left hypercenter} of $L$ is the largest element of the left upper central series. The lattice $L$ is {\em left hypercentral} if the left upper central series reaches~$1$.

\begin{remark}{\rm Notice that if $L$ is a multiplicative lattice and $h\in L$, then the interval $[0,h]_L:=\{\,x\in L\mid x\le h\,\}$ in $L$ is a multiplicative sublattice of $L$. Clearly, $0$ is a prime element of $[0,h]_L$ if and only if for every $a,b\in L,\ a\le h,\ b\le h, ab=0\Rightarrow a=0$ or $b=0$.}\end{remark}

Let us prove that \cite[Lemma 3.1]{FGT} extends from groups to multiplicative lattices with infinite distributivity.

\begin{lemma} Let $L$ be a multiplicative lattice with infinite $m$-distributivity, $h\in L$, and $n\in [0,h]_L$. If $0$ is a prime element of $[0,n]_L$, then $\lann_{[0,h]}(n)$ coincides with $\rann_{[0,h]}(n)$ and is the unique element $p\in\Spec([0,h]_L)$ such that $p\wedge n=0$.\end{lemma}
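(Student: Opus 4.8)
The plan is to exploit throughout the two consequences of infinite $m$-distributivity: it forces finite $m$-distributivity and hence, by Lemma~\ref{lll}, the monotonicity condition, and it makes the annihilators behave well, so that $n\cdot\rann_{[0,h]}(n)=0$ and $\lann_{[0,h]}(n)\cdot n=0$. The first thing I would record is that $n\neq 0$: since a prime element must differ from the top element, and the top of $[0,n]_L$ is $n$, the hypothesis that $0$ is prime in $[0,n]_L$ already forces $n\neq 0$. I would also isolate the characterization used repeatedly: for $x\le h$ one has $x\le\rann_{[0,h]}(n)\Leftrightarrow nx=0$ and $x\le\lann_{[0,h]}(n)\Leftrightarrow xn=0$, which is immediate from the definition of the annihilators as greatest such elements together with infinite $m$-distributivity.

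Writing $r:=\rann_{[0,h]}(n)$ and $l:=\lann_{[0,h]}(n)$, the next step is to show $r=l$ and $r\wedge n=0$; here the primeness of $0$ in $[0,n]_L$ enters for the first time. Since $r\wedge n\le r$, monotonicity gives $n(r\wedge n)\le nr=0$; as $r\wedge n\le n$ and $n\neq 0$, primeness forces $r\wedge n=0$. Then $rn\le r\wedge n=0$ (using only $xy\le x$ and $xy\le y$), so $rn=0$ and hence $r\le l$; the symmetric argument gives $l\le r$, so $r=l=:p$, with $np=pn=0$ and $p\wedge n=0$. This also shows $p\neq h$, since otherwise $n\le h=p$ would give $p\wedge n=n=0$.

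The main obstacle is proving that $p$ is prime in $[0,h]_L$, where associativity is unavailable; the idea I would use is to pull the problem back into $[0,n]_L$. Given $x,y\le h$ with $xy\le p$, set $u:=n\wedge(x\vee p)$ and $v:=n\wedge(y\vee p)$, both in $[0,n]_L$. By monotonicity and finite $m$-distributivity, $uv\le(x\vee p)(y\vee p)=xy\vee xp\vee py\vee p^2\le p$, while $uv\le u\le n$; hence $uv\le p\wedge n=0$. Primeness of $0$ in $[0,n]_L$ then gives $u=0$ or $v=0$. If $u=0$, then $nx\le n\wedge(x\vee p)=0$, so $nx=0$ and therefore $x\le p$; similarly $v=0$ yields $y\le p$. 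Thus $p$ is prime.

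Finally, for uniqueness, let $q\in\Spec([0,h]_L)$ with $q\wedge n=0$. From $nq\le n\wedge q=0$ one gets $nq=0$, so $q\le\rann_{[0,h]}(n)=p$. Conversely, $np=0\le q$ together with the primeness of $q$ forces $n\le q$ or $p\le q$; the first is impossible, since it would give $n=n\wedge q=0$, so $p\le q$ and hence $q=p$. I expect the delicate point to be the primeness step, and in particular verifying that the substitutes $u,v$ for $x,y$ carry exactly the information needed to invoke the primeness of $0$ in $[0,n]_L$; the remaining steps are routine once the characterization $x\le p\Leftrightarrow nx=0$ and the identity $p\wedge n=0$ are in hand.
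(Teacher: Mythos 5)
Your proof is correct and follows essentially the same strategy as the paper: establish $\rann_{[0,h]}(n)\wedge n=0$, prove primeness by meeting the given elements with $n$ and invoking the primeness of $0$ in $[0,n]_L$, and derive uniqueness from $n\cdot\rann_{[0,h]}(n)=0\le q$. The only (harmless) variations are that you obtain $\lann=\rann$ directly from $rn\le r\wedge n=0$ rather than from the left/right symmetry of the uniqueness characterization, and in the primeness step you work with $n\wedge(x\vee p)$ (which needs finite $m$-distributivity) where the paper uses the slightly more economical $x\wedge n$.
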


\begin{proof} Consider the property ``$p\wedge n=0$'' of an element $p$ of $L$.

{\em Step 1: $\rann_{[0,h]}(n)$ has the property.}

In fact, $$(\rann_{[0,h]}(n)\wedge n)^2=(\rann_{[0,h]}(n)\wedge n)(\rann_{[0,h]}(n)\wedge n)\le n\cdot\rann_{[0,h]}(n)=0.$$ Since $0$ is prime in $[0,n]_L$, it follows that $\rann_{[0,h]}(n)\wedge n=0$.

\smallskip

{\em Step 2: $\rann_{[0,h]}(n)$ is prime in $[0,h]_L$}.

First of all notice that $\rann_{[0,h]}(n)<h$, otherwise $nh=0$, so that $n^2=0$, so $n=0$, hence $0$ is not prime in $[0,n]_L$, a contradiction.

Suppose $a,b\in L$, $a,b\le h$, and $ab\le \rann_{[0,h]}(n)$. Then $(a\wedge n)(b\wedge n)\le ab\wedge n\le \rann_{[0,h]}(n)\wedge n=0$ by Step 1. Since $0$ is prime in $[0,n]_L$, it follows that either $a\wedge n=0$ or $b\wedge n=0$. If, for instance,  $a\wedge n=0$, then $na=0$, so  $a\le \rann_{[0,h]}(n)$. Similarly if $b\wedge n=0$. 

\smallskip

{\em Step 3:  $\rann_{[0,h]}(n)$ is the unique prime element of $[0,h]_L$ with the property.}

Let $p$ be a prime element of $[0,h]_L$ with the property, i.e., such that $p\wedge n=0$. Then $np=0$, so $p\le \rann_{[0,h]}(n)$. From $n\cdot \rann_{[0,h]}(n)=0\le p$, it follows that either $n\le p$ or $ \rann_{[0,h]}(n)\le p$. If $n\le p$, then $n=p\wedge n=0$, so  $0$ is not prime in $[0,n]_L$, a contradiction. Thus $ \rann_{[0,h]}(n)\le p$. Since we have already seen that $p\le \rann_{[0,h]}(n)$, we get that $p= \rann_{[0,h]}(n)$, which concludes Step 3.

\smallskip

{\em Step 4:  $\rann_{[0,h]}(n)=\lann_{[0,h]}(n)$.}

We have seen that $\rann_{[0,h]}(n)$ is the unique element $p\in\Spec([0,h]_L)$ such that $p\wedge n=0$. This is a right/left symmetric property, so that $\lann_{[0,h]}(n)$ is also the unique element $p\in\Spec([0,h]_L)$ such that $p\wedge n=0$. This concludes the proof of the Lemma.
 \end{proof}

\section{Some examples.}

\subsection{Groups} We have already mentioned above the example of groups. For any fixed group $G$, consider the complete modular lattice $\Cal N(G)$ of all normal subgroups of $G$, i.e., in the language of Universal Algebra,  the lattice of all congruences of $G$. It is possible to give $\Cal N(G)$ a multiplicative lattice structure in a number of ways. The most interesting one is that of defining as a product of $A,B\in\Cal N(G)$ the commutator $[A,B]$, which is a normal subgroup of $G$ contained in $A$ and $B$. This multiplication on $\Cal N(G)$ is commutative,  but not associative. But it is also possible to define as product of $A,B\in\Cal N(G)$ the intersection $A\cap B$, or the trivial subgroup of $G$ with one element. In this way we get three different structures of multiplicative lattice on $\Cal N(G)$. Of course, it is not possible to take as a product in the multiplicative lattice $\Cal N(G)$ the product $AB$ of two normal subgroups $A,B\in\Cal N(G)$, because it is not true that $AB\subseteq A\cap B$: in fact, $AB$ is the join $A\vee B$ in the complete lattice $\Cal N(G)$.

\bigskip

Notice that groups behave so similarly to commutative rings without identity because in the multiplicative lattice $\Cal N(G)$ of any group $G$, the  multiplication $[-,-]$ of $\Cal N(G)$ is always a commutative operation, though it is not associative. 

\subsection{$G$-groups}
Another immediate application of our theory is to $G$-groups, which we studied in \cite{1,2,3,4,5}. The notion of $G$-group $H$ is classical. Sometimes $G$ is called an {\em operator group} on $H$ \cite[Definition~8.1]{Suzuki}. For any two groups $G$ and $H$ we say that $H$ is a {\em (left) $G$-group} if there is a fixed group homomorphism $\alpha\colon G\to \Aut(H)$. That is, a $G$-group is a pair $(H,\alpha)$, where $H$ is a group and  $\alpha\colon G\to \Aut(H)$ is a group morphism. Equivalently, a $G$-group can be defined as a group $H$ endowed with a mapping $\cdot{\,}\colon   G\times H\to H$, $ (g,h)\mapsto gh$, called {\em left scalar multiplication}, such that 
   
 \noindent {\rm (a)} $g(hh')=(gh)(gh')$,

 \noindent {\rm (b)} $(gg')h = g(g'h)$, and 

  \noindent {\rm (c)} $1_Gh=h$

\noindent for every $g,g'\in G$ and every $h,h'\in H$.

The reader will notice the analogy between the category $G$-$\Grp$ of $G$-groups and the category $R$-Mod of all left modules over a fixed ring $R$. The category $R$-Mod has as objects the pairs $(M,\lambda)$, where $M$ is an additive abelian group and $\lambda\colon R\to\End(M)$ is a ring morphism. In the category $G$-$\Grp$ the morphisms $f\colon(H,\varphi)\to(H',\varphi')$ are the group homomorphisms $f\colon H\to H'$ such that $f(gh)=gf(h)$ for every $g\in G$, $h\in H$, exactly as in the category $R$-Mod. 

We say that $H$ is an {\em abelian $G$-group} if $H$ is a $G$-group and $H$ is an abelian group. Most of the results in \cite{FGT} hold not only for groups, but, more generally, for arbitrary $G$-groups.

The most important example of $G$-group is the regular left $G$-group, that is, the
group $G$ on which $G$ acts via the inner automorphisms of $G$. More precisely, for any fixed group $G$,  let
 $\alpha_g\colon G\to G,\ \alpha_g\colon h\mapsto ghg^{-1}$, be the {\em inner automorphism} of $G$ determined by $g$, for every $g\in G$.
 There is a canonical group morphism $\alpha\colon G\to\Aut(G)$, $g\mapsto\alpha_g$, which makes $G$ a $G$-group. It is called the {\em regular} left $G$-group, similarly to the left $R$-module $_RR$, which is called the regular left $R$-module.
 Other important examples of $G$-groups are the normal subgroups $N$
of $G$ (they are the $G$-subgroups of the regular left $G$-group $G$), and the factor groups $G/N$
(=\,homomorphic images of the regular left $G$-group $G$). Notice that the lattice of all subgroups of $G$ is the lattice of all $\{1\}$-subgroups of the $\{1\}$-group $G$, the lattice of all normal subgroups of $G$ is the lattice of all $G$-subgroups of the regular left $G$-group $G$, the lattice of all characteristic subgroups of $G$ is the lattice of all $\Aut(G)$-subgroups of the $\Aut(G)$-group $G$.

For any fixed $G$-group $H$, the associated multiplicative lattice is the lattice of all normal $G$-subgroups of $H$, i.e., the normal subgroups of $H$ that are closed under the action of the elements of $G$. Multiplication in this lattice is given by the commutator subgroup.

\subsection{Rings without identity} For rngs (associative rings possibly without identity) the associated multiplicative lattice is the same as for rings with identity. Since now we will be dealing with rngs (associative rings without identity), and their theory goes back to the times of Jacobson and Kaplansky, let us briefly recall the theory of Jacobson radical for rings not necessarily with identity. Let $\Rng$ denote the category of all rings, with or without identity, in which morphisms are the mappings that preserve addition and multiplication, and let $\Ring$ be the category of all rings with identity, in which morphisms are the mappings that preserve addition, multiplication and the identities. There is a further third category involved. It is the category $\Ring_a$ of all rings with augmentation, whose objects are the pairs $(S,\varepsilon_S)$, where $S$ is an object in $\Ring$ and $\varepsilon_S\colon S\to\Z$ is a morphism in $\Ring$. Morphisms $f\colon (S,\varepsilon_S)\to (S',\varepsilon_{S'})$ in $\Ring_a$ are the morphisms $f\colon S\to S'$ in $\Ring$ such that $\varepsilon_{S'}f=\varepsilon_S$: $$
\begin{array}{ccccc}
\Z&\longrightarrow&S&\stackrel{\varepsilon_S}{\longrightarrow}&\Z
\\
\Vert && \phantom{\scriptstyle f}\downarrow {\scriptstyle f} &&\Vert \\
\Z&\longrightarrow&S'&\stackrel{\varepsilon_{S'}}{\longrightarrow}&\Z.
\end{array}
$$ There is an equivalence of categories $F\colon\Rng\to\Ring_a$ that associates with any object $R$ of $\Rng$ the abelian group  direct sum $R_+:=R\oplus \Z$ with multiplication defined by
$(r,z)\cdot(r',z')=(rr'+zr'+z'r, zz')$ for every $(r,z),(r',z')\in  R_+.$ This ring $R_+$ is a ring with augmentation. Notice that there is a short exact sequence $0\to R\to R_+\to\Z\to0$ in the category $\Rng$, which is a pointed category with zero object the zero ring with one element. In particular, $R$ is an ideal of the ring with identity $R_+$. The quasi-inverse of the category equivalence $F$ associates with each object $(S,\varepsilon_S)$ of $\Ring_a$ the kernel of $\varepsilon_S$.

The construction of the ring $R_+$ goes back to J. L. Dorroh (\cite{Dorroh}, but also see \cite{Mesyan}). The right (left, two-sided) ideals of $R$ are exactly the right (left, two-sided) ideals of $R_+$ contained in $R\oplus 0$. Conversely, the right (left, two-sided) ideals of $R_+$ are all of the form $K=\{\,(-r,a)\mid r\in J, \ a\in A,\ \varphi(r)=a+Z\,\}$, where $Z\le A$ are ideals of the ring of integers $\Z$, $J$ is a right (left, two-sided, resp.) ideal of $R$, and $\varphi\colon J\to A/Z$ is a morphism in the category $\Rng$.

Given any left module over a rng $R$, that is, a pair $(M,\lambda)$, where $M$ is an additive abelian group and $\lambda\colon R\to\End(M)$ is a morphism in $\Rng$, then $\lambda$ extends uniquely to a morphism $\lambda_+\colon R_+\to\End(M)$ in $\Ring$. This assignement defines a category isomorphism between the categories $\Mod R$ and $\Mod R_+$.  	

\bigskip

A similar situation occurs in several different settings. For instance, let $G$ be a group. Then we can associate with every $G$-group $(H,\varphi)$ the semidirect product $H\rtimes G$, getting an equivalence between the category $G$-$\Grp$ of $G$-groups and the category $G \lSdp$ of all $G$-semidirect products. The objects of $G$-{\bf Sdp} are the triples $(P,\alpha,\beta)$, where $P$ is a group and $\alpha\colon G\to P$, $\beta\colon P\to G$ are morphisms such that the composite mapping $\beta\alpha$ is the identity automorphism $id_G$ of the group $G$. The morphisms from $(P,\alpha,\beta)$ to $(P',\alpha',\beta')$ in $G$-{\bf Sdp} are all group morphisms $a\colon P\to P'$ which make the diagram
$$
\begin{array}{ccccc}
G&\stackrel{\alpha}{\longrightarrow}&P&\stackrel{\beta}{\longrightarrow}&G
\\
\Vert && \phantom{\scriptstyle a}\downarrow {\scriptstyle a} &&\Vert \\
G&\stackrel{\alpha'}{\longrightarrow}&P'&\stackrel{\beta'}{\longrightarrow}&G
\end{array}
$$ commute. Here the vertical arrows on the right and on the left are the identity morphism $id_G$ of $G$. Thus the diagram commutes if and only if
$a\alpha=\alpha'$ and $\beta' a=\beta$. The category $G$-{\bf Sdp} is the category of pointed objects over the $G$-group~$G$ (Borceux \cite[page 28]{Bor}; also see \cite{3}). Let $(H,\varphi )$ be a $G$-group. Let $P$ be the semidirect product $H\rtimes G$, that is, the cartesian product $H\times G$ with the operation defined by $(h_1, g_1 )(h_2, g_2 )=(h_1\varphi (g_1 )(h_2), g_1 g_2)$.
 Let $\alpha\colon G\to P$ and let $\beta\colon P\to G$ be defined by $\alpha (g)= (1,g)$ and   $\beta (h,g)= g$. Then $\alpha $ and $\beta $ are group morphisms and $\beta \alpha = id_G$, so that $(P,\alpha,\beta)$ is an object of $G$-{\bf Sdp}.

If $( H,\varphi )$ and $( H^{'} ,\varphi^{'}  )$ are $G$-groups, $(P,\alpha,\beta)$ and $(P',\alpha',\beta')$ are the corresponding objects in $G$-{\bf Sdp} and $ f \colon(H,\varphi ) \to (H',\varphi')$ is a $G$-group morphism, define $\widetilde{f}\colon P=H\rtimes G\to P'=H'\rtimes G$ by $\widetilde{f}(h,g)=(f(h),g)$ for every $(h,g)\in P=H\rtimes G$. Then $\widetilde{f}$ is a morphism in the category $G$-{\bf Sdp}. The assignments $( H,\varphi )\mapsto (P,\alpha,\beta)$ and $f\mapsto\widetilde{f}$ define a category equivalence $F\colon G$-$\Grp$${}\to G$-{\bf Sdp} (see \cite[Proposition~5.7]{Bor}  or \cite[4.1]{3}).

\bigskip

An even clearer analogy is with the category of $k$-algebras. Let $k$ be a commutative ring with identity. Recall that a $k$-algebra $R$ is a ring that is also a $k$-module $R_k$, and in which $(r s)\lambda= (r\lambda)s = r (s\lambda)$ for every $r,s\in R$ and $\lambda \in k$.   When $R$ is a ring with identity, this is equivalent to the existence of a morphism  $\alpha\colon k\to R$ in $\Ring$ with $\alpha(k)$ contained in the center of $R$. An augmented $k$-algebra is a triple  $(R,\alpha,\beta)$, where $R$ is a ring with identity and $\alpha\colon k\to R$, $\beta\colon R\to k$ are morphisms in $\Ring$ with $\alpha(k)\subseteq Z(R)$ and $\beta\alpha=id_k$, the identity automorphism of the ring $k$. For any $k$-algebra $R$ (with or without identity) it is clear how to give $R_+:=R\oplus k$ a structure of augmented $k$-algebra with identity. There is a short exact sequence $0\to R\to R_+\to k\to 0$ in the pointed category of all $k$-algebras.

\bigskip

Let us go back to rngs. On any rng $R$ it is possible to define the operation $x\circ y = x+y +xy$ \cite[p.~85]{FR}. Then $(R,\circ)$ is a multiplicative monoid with the additive zero $0_R$ of the ring $R$ as its identity. The Jacobson radical $J(R)$ of $R$ is the set of all $x\in R$ such that $yx$ is left invertible in  $(R,\circ)$ for all $y\in R$ (equivalently, the set of all $x\in R$ such that $xy$ is right invertible in  $(R,\circ)$ for all $y\in R$). The ideal $J(R)$ is contained in the group of units of the monoid $(R,\circ)$. For any rng $R$, one has that $J(R_+)=J(R)\oplus 0$.

Clearly, the mapping $\psi\colon S\to S$, $\psi\colon x\mapsto 1+x$, is a monoid isomorphism of the monoid $(S,\circ)$ onto the monoid $(S,\cdot{})$ for every ring $S$. In particular, $\varphi(J(S))\subseteq \varphi(U(S,\circ))=U(S,\cdot{})$ for any ring $S$, and $\varphi(J(R))=\varphi(J(R_+))\subseteq \varphi(U(R_+,\circ))=U(R_+,\cdot{})$. (More precisely, $J(S)$ is the largest of all the ideals $I$ of a ring $S$ such that $\varphi(I)\subseteq U(S,{}\cdot {})$.) This will be useful in the next subsection, devoted to braces.

\subsection{Braces}
 Following Drinfeld \cite{23}, a {\sl set-theoretic solution
of the Yang-Baxter equation} is a pair $(X, r)$, where $X$ is a set and
$r \colon X\times X \to X \times X$ is a bijection such that
$$(r\times\id)(\id\times r)(r\times\id)=(\id\times r)(r\times\id)(\id\times r).$$
 
 \smallskip 
 
 Rump \cite{Rump} proved that there is a  connection between radical rings and set-theoretic solutions of the Yang-Baxter equation. 
A {\sl radical ring} is a rng $R$ for which $R=J(R)$. Hence a rng $R$ is a radical ring if and only if the monoid $(R,\circ)$ is a group, and for such a ring $R$, the mapping
$$r \colon R\times R \to R \times R,\quad r(x,y) = (xy + y, (xy + y)'\circ x\circ y),$$
where $z'$ denotes the inverse of $z$ with respect to the operation $\circ$, is a non-
degenerate solution of the Yang-Baxter equation such that $r^2 = \id_{R\times R}$.

Rump \cite{Rump} generalized this to braces, and braces were generalized in 2017 to skew braces by L.~Guarnieri and L.~Vendramin \cite{GV}.
A {\sl (left) skew brace} is
a triple $(A, \circ, *)$, where $(A, \circ) $ and $(A,*)$ are groups (not necessarily abelian) such that $$a\circ (b * c) = (a\circ b)*a^{-1}* ( a\circ c)$$
for every $a,b,c\in A$. Here $a^{-1}$ denotes the inverse of $a$ in the group $(A,*)$. The inverse of $a$ in the group $(A,\circ)$ will be denoted by $a'$. Notice the similarity between this definition and the definition of a left near-ring with identity $(N,*,\cdot)$, where $(N,*)$ is a  group (not necessarily abelian), $(N,\cdot)$ is a monoid, and $a (b * c) = a b * a c$. From the next Lemma we will see the close relation between left skew braces and $G$-groups.

  \bigskip
 
 For every skew brace $(A, \circ, *)$, the mapping $$r \colon A\times A \to A \times A,\quad r(x,y) = (x^{-1} *(x\circ y),(x^{-1} *(x\circ y))'\circ x\circ y),$$
is a non-degenerate set-theoretic solution of the Yang-Baxter equation (\cite[Theorem~3.1]{GV} and \cite[p.~96]{KSV}).
 
\begin{lemma}\label{Bachi}{\rm \cite{B18}} Let $A$ be a skew brace. Then $\lambda\colon (A,\circ)\to\Aut (A,*)$, given by $\lambda\colon a\mapsto\lambda_a$, where $\lambda_a(b)=a^{-1}* (a\circ b)$, is a well-defined group homomorphism.\end{lemma}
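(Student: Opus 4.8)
The plan is to split the statement into two parts: first that each $\lambda_a$ is an endomorphism of $(A,*)$, and second that $a\mapsto\lambda_a$ is multiplicative and carries $\circ$-inverses to composition-inverses, the latter forcing each $\lambda_a$ to be bijective (hence an automorphism) and simultaneously yielding the homomorphism property. I would begin with the endomorphism check, which is a direct substitution into the defining axiom: for $b,c\in A$,
\[
\lambda_a(b*c)=a^{-1}*\bigl(a\circ(b*c)\bigr)=a^{-1}*(a\circ b)*a^{-1}*(a\circ c)=\lambda_a(b)*\lambda_a(c),
\]
where the middle equality is exactly $a\circ(b*c)=(a\circ b)*a^{-1}*(a\circ c)$, so $\lambda_a\in\End(A,*)$.

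Next I would record two preliminary identities. Putting $b=c=e_*$ (the identity of $(A,*)$) into the axiom and cancelling shows $a\circ e_*=a$ for all $a$; specialising $a$ to the identity $e_\circ$ of $(A,\circ)$ then gives $e_*=e_\circ=:e$, so the two group structures share an identity and consequently $\lambda_e=\id$. Feeding $b^{-1}*b=e_*$ into the axiom yields $(a\circ b^{-1})*a^{-1}*(a\circ b)=a$, which rearranges into the key identity $(a\circ b)^{-1}=a^{-1}*(a\circ b^{-1})*a^{-1}$ (all inverses taken in $*$).

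With these in hand I would verify multiplicativity $\lambda_{a\circ b}=\lambda_a\,\lambda_b$ (composition of maps). Using associativity of $\circ$, the left-hand side evaluated at $c$ is $(a\circ b)^{-1}*\bigl(a\circ(b\circ c)\bigr)$; expanding $\lambda_a(\lambda_b(c))$ through the axiom produces $a^{-1}*(a\circ b^{-1})*a^{-1}*\bigl(a\circ(b\circ c)\bigr)$, and the key identity matches the two prefixes. Finally, taking $b=a'$ gives $\lambda_a\,\lambda_{a'}=\lambda_{a\circ a'}=\lambda_e=\id$, and symmetrically $\lambda_{a'}\,\lambda_a=\id$, so $\lambda_a$ is bijective with inverse $\lambda_{a'}$. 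Thus each $\lambda_a$ lies in $\Aut(A,*)$ (well-definedness) and $\lambda\colon(A,\circ)\to\Aut(A,*)$ is a group homomorphism.

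The main obstacle is the multiplicativity step: the bookkeeping of the two distinct inverses ($*$ versus $\circ$) and of the two operations must be handled carefully, and the crux is the key identity $(a\circ b)^{-1}=a^{-1}*(a\circ b^{-1})*a^{-1}$, which is precisely what reconciles the prefix $(a\circ b)^{-1}$ appearing in $\lambda_{a\circ b}$ with the composite prefix emerging from the brace axiom.
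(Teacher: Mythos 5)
Your proof is correct. Note that the paper itself gives no argument for this lemma at all --- it simply cites \cite{B18} --- so there is nothing internal to compare against; your write-up supplies the standard verification in full. The three ingredients are all sound: the endomorphism property of $\lambda_a$ is an immediate rewriting of the skew brace axiom; the identities $a\circ e_*=a$, $e_*=e_\circ$, and $(a\circ b)^{-1}=a^{-1}*(a\circ b^{-1})*a^{-1}$ follow by the cancellations you indicate; and the multiplicativity computation, after expanding $a\circ\bigl(b^{-1}*(b\circ c)\bigr)$ via the axiom, matches $\lambda_{a\circ b}(c)$ exactly through that key identity. Deducing bijectivity of each $\lambda_a$ from $\lambda_a\lambda_{a'}=\lambda_{a'}\lambda_a=\lambda_e=\id$ rather than checking it directly is a clean way to get well-definedness and the homomorphism property in one stroke.
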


  In other words, for any skew brace $(A, \circ, *)$, we have that $(A,*)$ is an $(A,\circ)$-group with respect to the action $\lambda\colon (A,\circ)\to\Aut (A,*)$ described in the statement of Lemma~\ref{Bachi}. Conversely, suppose that a set $A$ has two group structures 
   $(A, \circ) $ and $(A,*)$ and that $(A,*)$ is an $(A,\circ)$-group with respect to the action $\lambda\colon (A,\circ)\to\Aut (A,*)$, defined by $\lambda\colon a\mapsto\lambda_a$, where $\lambda_a(b)=a^{-1} *(a\circ b)$. Then the fact that each $\lambda_a$ is an automorphism yields that $\lambda_a(b*c)=\lambda_a(b)*\lambda_a(c)$, i.e., $ a^{-1}* (a\circ (b*c))=a^{-1}* (a\circ b)*a^{-1}* (a\circ c)$, from which $ a\circ (b*c)=(a\circ b)*a^{-1}* (a\circ c)$. Hence braces are exactly those particular $G$-groups $(H,\lambda)$ for which $G=H$ as sets, and the action $ \lambda$ is defined by $\lambda_a(b)=a^{-1}* (a\circ b)$.

  The semidirect product corresponding to such an $(A,\circ)$-group  $(A,*)$ is the group $P:=(A,*)\ltimes(A,\circ)$, i.e., the cartesian product $P:=A\times A$ with group operation defined by \begin{equation}(a_1,a_2)\cdot(b_1,b_2)=(a_1*a_2^{-1}* (a_2\circ b_1),a_2\circ b_2).\label{vhi}\end{equation} Conversely, given two group structures $(A, \circ) $ and $(A,*)$ on the same set $A$ such that $P:=A\times A$ with the operation as in (\ref{vhi}) is a group, then  $(A, \circ, *)$ is a left skew brace. This proves that left skew braces are particular groups: they are the groups $G$ for which there is a normal subgroup $A\unlhd G$ and a bijection $\alpha\colon G/A\to A$ with suitable properties. (The two group structures on $A$ are the original one as a subgroup of $G$ and the one brought from $G/A$ via the bijective $1$-cocycle $\alpha$.) 
  
  The original definition of brace given by Rump was that with  $(A,*)$ an additive abelian group. Henca a brace is an abelian group $(A,+)$ that is also a group $(A,\circ)$ with a monoid morphism $\lambda'\colon (A,\circ)\to\End(A)\times\Z$ whose image is contained in $\End(A)\times\{1\}$. Notice the similarity with the definition of $k$-algebra: Given a {\em commutative} ring $k$, a $k$-algebra is a ring $A$ with a ring morphism $\lambda\colon k\to A$ whose image is contained in the center $A$. Viewing a left skew brace $(A,\circ,*)$ as the corresponding semidirect product $P:=(A,*)\ltimes(A,\circ)$ with the group operation (\ref{vhi})  suggests us that the correct notion of action of a left skew brace $(A,\circ,*)$ over a group $H$ is that of $P$-group $H$. Cf.~\cite{RumpModules}. 
  
  Notice that the group $U(E\times\Z)$ of units of any ring $E\times\Z$ with identity and with an augmentation $\Z\to E\times\Z\to\Z$ has a subgroup $U(E\times\Z)\cap (E\times\{1\})$ of index 2. When $E$ already has an identity, then there is a unique morphism $u\colon E\times\Z\to E$ of rings with identity that extends the identity morphism $E\to E$. Thus, for a brace $A$,  one has the composite monoid morphism $\lambda:=u\circ\lambda'\colon A\to\End(A)$ (cf.~Lemma~\ref{Bachi}).

For a skew brace $(A, \circ,*)$, and more generally for any group $(A,*)$, we don't have a ring $\End(A,+)$ anymore, but a nearring $M(A,*)$, consisting of all mappings $A\to A$. Addition is defined by $(f+g)(a)=f(a)*g(a)$, multiplication is composition of mappings, the zero element is the zero map, i.e., the mapping which takes every element of $A$ to the identity element of $(A,*)$, the additive inverse $-f $ of $f \in M(G)$ is $ (-f)(a) = (f(a))^{-1}$ (inverse of $f(a)$ in the group $(A,*)$) for all $a\in A$, the identity of the nearring is the identity mapping $\iota_A\colon A\to A$. For the nearring $M(A,*)$ it is also possible to construct its Dorroh extension as in \cite{Veldsman}, because $M(A,*)$ trivially satisfies condition (BC): the Dorroh extension is the cartesian product $M(A,*)\times\Z$ with addition and multiplication defined by $$(f,a)+(g,b)=(f+a\iota_A +g-a\iota_A ,a+b)$$ $$(f,a)(g,b)=((f+a\iota_A )(g+b\iota_A )-(ab)\iota_A , ab).$$ As in \cite{Veldsman}, it is easy to verify that $M(A,*)\times\Z$ is a nearring with additive identity $(0,0)$, multiplicative identity $(0,1)$, and the additive inverse of $(f,a)$ is  $(-a\iota_A-f+a\iota_A, -a)$. Then $\Z\to M(A,*)\times\Z\to\Z$ is an augmented nearring with identity. The kernel of the augmentation $M(A,*)\times\Z\to\Z$, $(f,a)\mapsto a$, is the two-sided ideal $\{\,(f,0)\mid f\in M(A,*)\,\}$ of $M(A,*)\times\Z$, and is a nearring isomorphic to $M(A,*)$. There is a surjective nearring morphism $M(A,*)\times\Z\to M(A,*)$, $(f,a)\mapsto f+a\iota$. We will continue in this direction in a next paper, now we go back to the presentation of a skew brace as a semidirect product $P$ of groups, in order to introduce left skew brace morphisms and ideals of braces.

 \bigskip
 
 A {\em morphism of skew braces} is a mapping $f\colon A\to A'$ such that $f(a\circ b)=f(a)\circ f(b)$ and $f(a*b)=f(a)* f(b)$ for every $a,b\in A$. Every such morphism $f\colon A\to A'$ induces a corresponding group morphism $f\times f\colon P=A\ltimes A\to P'=A'\ltimes A'$, $(f\times f)(a,b)=(f(a), f(b))$. Since $(A, \circ)$ and $(A, *)$ have the same identity, the group morphisms $f\colon (A, \circ)\to (A, \circ)$ and $f\colon (A, *)\to (A, *)$ have the same kernel $I$, which is a normal subgroup both in $ (A, \circ)$ and in $ (A, *)$. The kernel of $f\times f\colon P=A\ltimes A\to P'=A'\ltimes A'$ is $I\times I$. The assignements $(A, \circ,*)\mapsto P:=(A,*)\ltimes(A,\circ)$ and $f\mapsto f\times f$ define a faithful functor of the category of left skew braces into the category of groups. This allows us to view left skew braces as groups and suggests us the definition of kernel of a left skew brace morphism  $f\colon A\to A'$ and of factor skew brace, getting a left skew brace isomorphism $(A/I,*)\ltimes(A/I,\circ)\to (f\times f)(P)$, as follows.
The {\em kernel} of a left skew brace morphism  $f\colon A\to A'$ is the inverse image $f^{-1}(1_A)$ of the identity $1_A$ of $ (A, \circ)$, (which coincides with the identity of  $ (A, *))$.

An {\em ideal} of a skew brace $A$ is a normal subgroup $I$ both of $(A,\circ)$ and $(A,*)$ such that $I\circ a=I*a$ for every $a\in I$. Notice that this implies that the factor groups of $(A,\circ)$ and $(A,*)$ modulo $I$ coincide as sets. Also notice that  $ I\circ a=I*a$ for every $a\in A$ if and only if, for every $a,b\in A$, one has $a*b^{-1}\in I$ if and only if $a\circ b'\in I$. Equivalently, if and only if $(x*a)\circ a'\in I$ and $(x\circ a)*a^{-1}\in I$ for every $x\in I$ and every $a\in A$. The semidirect product corresponding to the quotient left skew brace $A/I$ is $(A/I,*)\ltimes(A/I,\circ)$.

\bigskip

As far as the representation of a left skew brace as a $G$-group is concerned, i.e., as a $(A,\circ)$-group $(A,*)$, notice that if $I$ is an ideal of the left skew brace $A$, then $I$ is a normal $(A,\circ)$-subgroup of $(A,*)$, so that the group morphism  $\lambda\colon (A,\circ)\to\Aut (A,*)$ induces a group morphism  $(A,\circ)\to\Aut (A/I,*)$. But now $I$ is contained in the kernel of this morphism $(A,\circ)\to\Aut (A/I,*)$, which therefore induces a group morphism $\overline{\lambda}\colon (A/I,\circ)\to\Aut (A/I,*)$.

For any fixed left skew brace $A$, there is a canonical one-to-one correspondence between the set of all ideals of the left skew brace $A$ and the set of all equivalence relations on the set $A$ compatible with both the operations $\circ$ and $*$ (i.e., the equivalence relations $\sim$ on the set $A$ such that, for every $a,b,c,d\in A$, $a\sim b$ and $c\sim d$ imply $a\circ c\sim b\circ d$ and $a*c\sim b*d$. Given any ideal $I$ of the  left skew brace $A$, the corresponding equivalence relation on $A$ is ${\sim}_I$, defined, for every $a,b\in A$, by $a\sim_I b$ if $a^{-1}*b\in I$ (equivalently, if $a'\circ b\in I$; notice that the equivalence classes of $A$ modulo $\sim_I$ are the cosets $a*I=I*a=a\circ I=I\circ a$). Conversely, if $\sim$ is an equivalence relation on  $A$ compatible with both the operations $\circ$ and $*$, then the corresponding ideal of $A$ is the equivalence class of $1_A$ modulo $\sim$.

    \bigskip
   
Any intersection of ideals is an ideal, so that we get a complete lattice $\Cal I(A)$ of ideals for any skew brace $A$. In particular, every subset $X$ of $A$ generates an ideal, the intersection of the ideals that contain $X$. It can be ``constructively'' described as follows. Given a subset $X$ of a skew brace $A$, consider the increasing sequence $X_n$, $n\ge 0$, of subsets of $A$ where $X_0:=X$, $X_{n+1}$ is the normal closure of $X_n$ in $(A,*)$ if $n\equiv 0 \ (\mathrm{mod}\ 3)$, $X_{n+1}$ is the normal closure of $X_n$ in $(A,\circ)$ if $n\equiv 1 \ (\mathrm{mod}\ 3$, $X_{n+1}:=\bigcup_{a\in A}\lambda_a(X_n)$ if $n\equiv 2 \ (\mathrm{mod}\ 3)$. The ideal of $A$ generated by $X$ is $\bigcup_{n\ge0}X_n$.
     
   \bigskip 
   
  As multiplication in the complete lattice of all ideals of a left skew brace $A$ define as product of two ideals $I$ and $J$ of $A$ the ideal of the skew brace $A$ generated by $\pi_1([I\times I, J\times J]_P)\cup \pi_2([I\times I, J\times J]_P)$, where $[I\times I, J\times J]_P$ is the commutator in $P$ of its normal subgroups $I\times I$ and $ J\times J$, and the mappings $\pi_i\colon A\times A\to A$, $i=1,2$, are the two canonical projections.
   With this operation,  $\Cal I(A)$ becomes a multiplicative lattice. (Notice: these notations and notions differ from similar previous notations and notions in the literature.) For instance, in our terminology a left skew brace is {\em abelian} if and only if the two operations $\circ$ and $*$ coincide and are commutative.  Similarly, the {\em socle} $$(4)\qquad \soc(A):=\{ \, a\in A\mid ba=0\ \mbox{\rm for every }b\in A \,\}$$ of a brace $A$ as defined in \cite{Rump} is the {\em right center} in our terminology. This has been independently observed by Vsevolod Gubarev.)
        
     The notions of prime ideal, solvable,  nilpotent, etc., skew braces now become natural. This must be compared with previous notions by Rump (left and right nilpotent braces and radical chains \cite{Rump}), Rowen (2017, see the first paragraph of Section~4 in \cite{KSV}), in \cite{BCJO19}, and \cite{KSV}, and will be described in detail in a subsequent paper.
     But notice that most of these previous notions differ from ours. For instance, the product of two ideals of a left brace $A$ in \cite{Rump} is not an ideal of the left brace, but only a subgroup of $(A,*)$, while in our theory the product of two ideals is always necessarily an ideal. Also, the ideal generated by the product of two ideals in the sense of Rump \cite{Rump} can be different from the product of the two ideals in our sense. For instance, in \cite[Example~2]{Rump} one has that $A^{(3)}=\{0,111\}$ is not an ideal, while in our notation $A^{(3)}=A^2=A^{(4)}$ is always necessarily an ideal, and $A$ is not right nilpotent and not left nilpotent in our sense, but is left nilpotent in the sense of Rump because $A^{(4)}=0$.

\bigskip

{\bf  Acknowledgements} This paper is the result of several conversations and e-mail messages I have exchanged with Professor George Janelidze for the past two years, in particular during the writing of the paper \cite{FFJ}. I am extremely grateful to him. I have learnt a lot of things from him.

\bibliographystyle{amsplain}

\end{document}